\providecommand{\U}[1]{\protect\rule{.1in}{.1in}}
\providecommand{\U}[1]{\protect\rule{.1in}{.1in}}
\providecommand{\U}[1]{\protect\rule{.1in}{.1in}}
\providecommand{\U}[1]{\protect\rule{.1in}{.1in}}
\newtheorem{theorem}{Theorem}
\newtheorem{acknowledgement}[theorem]{Acknowledgement}
\newtheorem{corollary}[theorem]{Corollary}
\newtheorem{definition}[theorem]{Definition}
\newtheorem{remark}[theorem]{Remark}
\newenvironment{proof}[1][Proof]{\textbf{#1.} }{\ \rule{0.5em}{0.5em}}
\newdimen\dummy
\begin{document}

\title{A new cubature formula for weight functions on the disc, with error estimates}
\author{O. Kounchev and H. Render}
\maketitle

\begin{abstract}
We introduce a new type of cubature formula for the evaluation of an integral
over the disk with respect to a weight function$.$ The method is based on an
analysis of the Fourier series of the weight function and a reduction of the
bivariate integral into an infinite sum of univariate integrals. Several
experimental results show that the accuracy of the method is superior to
standard cubature formula on the disk. Error estimates provide the theoretical
basis for the good performance of the new algorithm.

\end{abstract}

\section{Introduction}

Recently, methods for the numerical evaluation of integrals of the form
\begin{equation}
I_{1}\left(  g\right)  =\int_{D}g\left(  x\right)  dx=\int_{0}^{2\pi}\int
_{0}^{R}g\left(  r\cos\varphi,r\sin\varphi\right)  rdrd\varphi\label{I1}%
\end{equation}
on the disc $D_{R}$ of radius $R$ in the plane $\mathbb{R}^{2}$ have received
increased attention in the framework of the meshless local Petrov-Galerkin
(MLPG) method, see \cite{DeBa00}, \cite{DeBa01}, \cite{MaPi10}, \cite{MFPG07}.
Numerical experiments in \cite{DeBa01} have given evidence that classical
rules like the \emph{piecewise midpoint quadrature rule}, or the \emph{rule of
Peirce} (for definitions see below (\ref{Rule1}) and (\ref{modPeirce})) are
superior to the \emph{Gauss-Legendre product rule} which is very popular in
the MLPG literature.

In the present paper we study new methods for the numerical evaluation of
integrals of the type%
\begin{equation}
I_{w}\left(  f\right)  =\int_{D_{R}}f\left(  x\right)  w\left(  x\right)  dx
\label{eqint}%
\end{equation}
where $w\left(  x\right)  $ is a (not necessarily non-negative) weight
function on the disc $D_{R}$ in the plane $\mathbb{R}^{2}.$ The introduction
of a weight function is an important concept in numerical integration: the
integrand $g\left(  x\right)  $ is decomposed into a product $f\left(
x\right)  w\left(  x\right)  $ where the factor $f\left(  x\right)  $ is a
function well-approximable by polynomials (see section $3.7$ in \cite{stoer})
and $w\left(  x\right)  $ is a function of limited smoothness or with a
singularity. Using the specific properties of the weight function $w\left(
x\right)  $ one aims to achieve a cubature formula for the integration of the
function $f$ with respect to $w\left(  x\right)  dx$ which should be more
accurate than using directly a cubature formulae for $g=f\cdot w$ like in
(\ref{I1}).

The main concept underlying our method is to expand the weight function
$w\left(  x\right)  =w\left(  re^{i\varphi}\right)  $ into a Fourier series
(\ref{FourierSeries}) and rely upon a similar expansion for the
polynomial-like function $f\left(  x\right)  $, called the Almansi expansion,
see (\ref{Almansi}). The reader will find an explicit description of this
construction below, after all necessary notations and tools are introduced.
Illustrating examples in this paper are the weight functions
\begin{equation}
w^{\left(  1\right)  }\left(  x,y\right)  =\frac{1+x}{\sqrt{x^{2}+y^{2}}}%
\quad\text{ and }\quad w^{\left(  2\right)  }\left(  x,y\right)  =\left\vert
y\right\vert \ \label{eqweight}%
\end{equation}
where the first weight function has a singularity in $0$ and the second weight
function is continuous on the closed ball but is not differentiable on the
line $y=0$ in the interior of the disk. We shall show by numerical
experiments, and by theoretical considerations as well, that our method
provides in these cases results which are better than the above-mentioned
methods for the approximation of the integral for the function $g\left(
x\right)  =f\left(  x\right)  w\left(  x\right)  .$

Let us now give a detailed introduction to the main topic of the present
paper, the numerical evaluation of the integral with respect to a weight
function $w.$ A central role in our approach plays the Fourier series of $w,$
given by
\begin{equation}
w\left(  re^{i\varphi}\right)  =w\left(  r\cos\varphi,r\sin\varphi\right)  :=%
{\displaystyle\sum_{k=0}^{\infty}}
{\displaystyle\sum_{\ell=1}^{a_{k}}}
w_{\left(  k,\ell\right)  }\left(  r\right)  Y_{\left(  k,\ell\right)
}\left(  \varphi\right)  . \label{FourierSeries}%
\end{equation}
Here we use a notation for the Fourier series which is more convenient in our
context, and which is well known from the theory of spherical harmonics (these
convenient notations will be important also for further multivariate
generalizations, as in \cite{Dolo13}): we work with the
\emph{orthonormalization} of the harmonics $\cos k\varphi$ and $\sin k\varphi
$, defined by
\begin{align}
Y_{\left(  0,1\right)  }\left(  \varphi\right)   &  =1/\sqrt{2\pi
}\label{sphericalHarmonics1}\\
Y_{\left(  k,1\right)  }\left(  \varphi\right)   &  =\frac{1}{\sqrt{\pi}}\cos
k\varphi\quad\text{ and }\quad Y_{\left(  k,2\right)  }\left(  \varphi\right)
=\frac{1}{\sqrt{\pi}}\sin k\varphi. \label{sphericalHarmonics2}%
\end{align}
for integers $k\geq1.$ Then $Y_{\left(  k,\ell\right)  }$ is an
\emph{orthonormal} system for $k\geq0,\ell=1,..,a_{k}$, where $a_{k}=2$ for
$k\geq1$, and $a_{0}=1.$ The $\left(  k,\ell\right)  $-th Fourier coefficient
of a complex-valued continuous function $f\left(  re^{i\varphi}\right)  $ is
\begin{equation}
f_{\left(  k,\ell\right)  }\left(  r\right)  :=\int_{0}^{2\pi}f\left(
re^{i\varphi}\right)  Y_{\left(  k,\ell\right)  }\left(  \varphi\right)
d\varphi\qquad\text{ for }k\geq0,\ell=1,..,a_{k} \label{fkl}%
\end{equation}
and the \emph{corresponding Fourier series} of $f$ is
\begin{equation}
f\left(  re^{i\varphi}\right)  =f\left(  r\cos\varphi,r\sin\varphi\right)  :=%
{\displaystyle\sum_{k=0}^{\infty}}
{\displaystyle\sum_{\ell=1}^{a_{k}}}
f_{\left(  k,\ell\right)  }\left(  r\right)  Y_{\left(  k,\ell\right)
}\left(  \varphi\right)  . \label{FourierSeriesGeneral}%
\end{equation}
Let us recall that the Fourier series of a polynomial $p\left(  x,y\right)  $
is of a very special form: there exist polynomials $\widetilde{p}_{\left(
k,\ell\right)  }$ and a number $N\leq\deg p\left(  x,y\right)  $ such that
\begin{equation}
p\left(  x,y\right)  =p\left(  r\cos\varphi,r\sin\varphi\right)  =%
{\displaystyle\sum_{k=0}^{N}}
{\displaystyle\sum_{\ell=1}^{a_{k}}}
\widetilde{p}_{\left(  k,\ell\right)  }\left(  r^{2}\right)  r^{k}Y_{\left(
k,\ell\right)  }\left(  \varphi\right)  ; \label{Almansi}%
\end{equation}
the representation (\ref{Almansi}) is called Gauss decomposition or Almansi
expansion of a polynomial $p$. Hence, the Fourier coefficient $p_{\left(
k,\ell\right)  }\left(  r\right)  $ of a polynomial $p\left(  x,y\right)  $ is
of the form
\begin{equation}
p_{\left(  k,\ell\right)  }\left(  r\right)  =\widetilde{p}_{\left(
k,\ell\right)  }\left(  r^{2}\right)  r^{k}. \label{Almansi2}%
\end{equation}
Moreover the degrees of all $\widetilde{p}_{\left(  k,\ell\right)  }$ are
bounded by $N-1$ if and only if the polynomial $p\left(  x,y\right)  $ is
\textbf{polyharmonic} of order $N,$ i.e. if $\Delta^{N}p\left(  x,y\right)
=0$, where $\Delta^{N}$ is the $N$-th iterate of the Laplace operator
$\Delta.$ These results are given a thorough treatment in see \cite{okbook},
\cite{kounchevRenderArkiv}, \cite{sobolev}.

We consider now the integral (\ref{eqint}), which after introducing polar
coordinates, becomes%

\[
I_{w}\left(  f\right)  =\int_{0}^{2\pi}\int_{0}^{R}f\left(  r\cos\varphi
,r\sin\varphi\right)  \cdot w\left(  r\cos\varphi,r\sin\varphi\right)  \cdot
rdrd\varphi.
\]
We replace the weight function $w\left(  re^{i\varphi}\right)  $ by its
Fourier series, and after interchanging summation and integration we obtain
\begin{equation}
I_{w}\left(  f\right)  =\int_{0}^{2\pi}\int_{0}^{R}f\left(  re^{i\varphi
}\right)  w\left(  re^{i\varphi}\right)  rdrd\varphi=%
{\displaystyle\sum_{k=0}^{\infty}}
{\displaystyle\sum_{\ell=1}^{a_{k}}}
\int_{0}^{R}f_{\left(  k,\ell\right)  }\left(  r\right)  w_{\left(
k,\ell\right)  }\left(  r\right)  rdr. \label{eqcentral2}%
\end{equation}
Note that for the constant weight function $w\left(  x,y\right)  =1=\sqrt
{2\pi}Y_{\left(  0,1\right)  }$ one obtains simply
\begin{equation}
I_{1}\left(  f\right)  =\int_{0}^{2\pi}\int_{0}^{R}f\left(  r\cos\varphi
,r\sin\varphi\right)  rdrd\varphi=\sqrt{2\pi}\int_{0}^{R}f_{\left(
0,1\right)  }\left(  r\right)  rdr. \label{eqintegralpolar}%
\end{equation}
Formula (\ref{eqcentral2}) is central to our approach since it reduces the
integration of the bivariate function $f\left(  re^{i\varphi}\right)  $ with
respect to a weight function $w\left(  re^{i\varphi}\right)  $ to the
calculation of an infinite family of univariate integrals with weight
functions $w_{\left(  k,\ell\right)  }\left(  r\right)  $.\emph{ }

Here we come to the most crucial point of our approach: as we assume that the
function $f$ is well-approximable by a polynomial $p$, we know that
$I_{w}\left(  f\right)  $ is close to $I_{w}\left(  p\right)  .$ By the
Almansi formula (\ref{Almansi}) and (\ref{Almansi2}), the one-dimensional
integrals in (\ref{eqcentral2}) for computing $I_{w}\left(  p\right)  $ are
equal to
\[
\int_{0}^{R}p_{k,\ell}\left(  r\right)  w_{\left(  k,\ell\right)  }\left(
r\right)  rdr=\int_{0}^{R}\widetilde{p}_{k,\ell}\left(  r^{2}\right)
r^{k}w_{\left(  k,\ell\right)  }\left(  r\right)  rdr.
\]
After a change of the variable $\rho=r^{2}$ it becomes
\begin{equation}
\frac{1}{2}\int_{0}^{R^{2}}p\left(  \rho\right)  \cdot\rho^{k/2}w_{\left(
k,\ell\right)  }\left(  \sqrt{\rho}\right)  d\rho. \label{defTkl}%
\end{equation}
Now we want to employ the $N$-point Gauss-Jacobi quadrature to the last
integral, with measure $\rho^{k/2}w_{\left(  k,\ell\right)  }\left(
\sqrt{\rho}\right)  .$ For this reason we need the following assumption which
will be made throughout the entire paper and which is called the
pseudo-definiteness of the weight function:

\bigskip

\textbf{General Assumption: } \emph{Each Fourier coefficient }$w_{\left(
k,\ell\right)  }\left(  r\right)  $\emph{\ of the weight function }$w$\emph{
does not change the sign over the interval }$\left(  0,R\right)  ,$ \emph{ and
it is integrable and continuous over }$\left(  0,R\right)  .$

\bigskip

Due to our assumption we infer the existence of the $N$-point Gauss-Jacobi
quadrature with nodes and coefficients (which are either all positive or all
negative)
\begin{gather}
t_{1,\left(  k,\ell\right)  }<...<t_{N,\left(  k,\ell\right)  }\label{nodes}\\
\lambda_{1,\left(  k,\ell\right)  },...,\lambda_{N,\left(  k,\ell\right)  }.
\label{coefs}%
\end{gather}
Due to the exactness of the Gauss-Jacobi quadrature for any integer $0\leq
s\leq2N-1$ we obtain the equalities
\begin{equation}%
{\displaystyle\sum_{j=1}^{N}}
\lambda_{j,\left(  k,\ell\right)  }\cdot t_{j,\left(  k,\ell\right)  }%
^{s}=\frac{1}{2}\int_{0}^{R^{2}}\rho^{s}\rho^{k/2}w_{\left(  k,\ell\right)
}\left(  \sqrt{\rho}\right)  d\rho=\int_{0}^{R}r^{2s}r^{k}w_{\left(
k,\ell\right)  }\left(  r\right)  rdr. \label{eqexactkl}%
\end{equation}
Hence, for a polynomial $f,$ for which $\deg f_{\left(  k,\ell\right)  }%
\leq2N-1$ for all $\left(  k,\ell\right)  $, by using the Gauss-Jacobi
quadrature (\ref{eqexactkl}), the integral (\ref{eqcentral2}) becomes
\begin{align}
I_{w}\left(  f\right)   &  =%
{\displaystyle\sum_{k=0}^{\infty}}
{\displaystyle\sum_{\ell=1}^{a_{k}}}
\int_{0}^{R}f_{\left(  k,\ell\right)  }\left(  r\right)  w_{\left(
k,\ell\right)  }\left(  r\right)  rdr\label{Iwf2}\\
&  =\frac{1}{2}%
{\displaystyle\sum_{k=0}^{\infty}}
{\displaystyle\sum_{\ell=1}^{a_{k}}}
\int_{0}^{R^{2}}f_{\left(  k,\ell\right)  }\left(  \sqrt{\rho}\right)
\rho^{-k/2}\left\{  \rho^{k/2}w_{\left(  k,\ell\right)  }\left(  \sqrt{\rho
}\right)  \right\}  d\rho\nonumber\\
&  =I_{N}^{\text{poly}}\left(  f\right) \nonumber
\end{align}
where we have put
\begin{equation}
I_{N}^{\text{poly}}\left(  f\right)  :=\frac{1}{2}%
{\displaystyle\sum_{k=0}^{\infty}}
{\displaystyle\sum_{\ell=1}^{a_{k}}}
{\displaystyle\sum_{j=1}^{N}}
\lambda_{j,\left(  k,\ell\right)  }\cdot t_{j,\left(  k,\ell\right)  }%
^{-\frac{k}{2}}\cdot f_{\left(  k,\ell\right)  }\left(  \sqrt{t_{j,\left(
k,\ell\right)  }}\right)  . \label{eqIpoly}%
\end{equation}

In \cite{Dolo13}, we defined $I_{N}^{\text{poly}}\left(  f\right)  $ as
\emph{polyharmonic cubature of degree} $N$ in arbitrary space dimension. The
reason for the name is the fact that $I_{N}^{\text{poly}}$ is exact on the
space of all polynomials of polyharmonic order $\leq2N,$ i.e. for each
polynomial $f$ such that $\Delta^{2N}f=0,$ or more general, on the space of
\emph{smooth functions }satisfying the polyharmonic equation $\Delta^{2N}f=0$

In previous work \cite{kounchevRenderArkiv}, \cite{kounchevRenderKleinDirac},
\cite{kounchevRenderArxiv}, we have given a motivation and a detailed analysis
of the polyharmonic cubature of degree $2N$ in the framework of the
Polyharmonic Paradigm, explaining the natural appearance of the factor
$t_{j,\left(  k,\ell\right)  }^{-\frac{k}{2}}$ in our formulae which is
related to the Gauss-Almansi decomposition of a polynomial.

As the values $f_{\left(  k,\ell\right)  }\left(  \sqrt{t_{j,\left(
k,\ell\right)  }}\right)  $ are Fourier coefficients, they can be approximated
by means of Discrete Cosine/Sine transform of the function $f$, by which we
mean the expression
\begin{equation}
f_{\left(  k,\ell\right)  }^{\left(  M\right)  }\left(  r\right)  :=\frac
{2\pi}{M}%
{\displaystyle\sum_{s=1}^{M}}
f\left(  re^{i\frac{2\pi s}{M}}\right)  Y_{\left(  k,\ell\right)  }\left(
\frac{2\pi s}{M}\right)  \label{DFT}%
\end{equation}

The main contribution of the present paper is the \textbf{Discrete
Polyharmonic Cubature with parameters} $\left(  N,M,K\right)  ,$ defined for
integers $N\geq1$, $M\geq1,$ and $K\geq0,$ by putting
\begin{align}
I_{\left(  N,M,K\right)  }^{\text{poly}}\left(  f\right)   &  :=\frac{1}{2}%
{\displaystyle\sum_{k=0}^{K}}
{\displaystyle\sum_{\ell=1}^{a_{k}}}
{\displaystyle\sum_{j=1}^{N}}
\lambda_{j,\left(  k,\ell\right)  }\cdot t_{j,\left(  k,\ell\right)  }%
^{-\frac{k}{2}}\cdot f_{\left(  k,\ell\right)  }^{\left(  M\right)  }\left(
\sqrt{t_{j,\left(  k,\ell\right)  }}\right) \label{DiscrPolyhCubature}\\
&  =\frac{\pi}{M}%
{\displaystyle\sum_{k=0}^{K}}
{\displaystyle\sum_{\ell=1}^{a_{k}}}
{\displaystyle\sum_{j=1}^{N}}
{\displaystyle\sum_{s=1}^{M}}
\lambda_{j,\left(  k,\ell\right)  }\cdot t_{j,\left(  k,\ell\right)  }%
^{-\frac{k}{2}}\cdot Y_{\left(  k,\ell\right)  }\left(  \frac{2\pi s}%
{M}\right)  \cdot f\left(  \sqrt{t_{j,\left(  k,\ell\right)  }}e^{i\frac{2\pi
s}{M}}\right)  . \label{DiscrPolyhCubature+}%
\end{align}

We see that unlike (\ref{eqIpoly}) formula (\ref{DiscrPolyhCubature+}) for
$I_{\left(  N,M,K\right)  }^{\text{poly}}\left(  f\right)  $ is indeed a
cubature formula in the usual sense of the word. Its coefficients (weights)
$\left\{  \lambda_{j,\left(  k,\ell\right)  }\cdot t_{j,\left(  k,\ell\right)
}^{-\frac{k}{2}}\cdot Y_{\left(  k,\ell\right)  }\left(  \frac{2\pi s}%
{M}\right)  \right\}  $ have varying signs but they satisfy the following
\emph{remarkable} inequality
\begin{equation}
\frac{\pi}{M}%
{\displaystyle\sum_{k=0}^{K}}
{\displaystyle\sum_{\ell=1}^{a_{k}}}
{\displaystyle\sum_{j=1}^{N}}
{\displaystyle\sum_{s=1}^{M}}
\left\vert \lambda_{j,\left(  k,\ell\right)  }\cdot t_{j,\left(
k,\ell\right)  }^{-\frac{k}{2}}\cdot Y_{\left(  k,\ell\right)  }\left(
\frac{2\pi s}{M}\right)  \right\vert \leq\sqrt{\pi}\left\Vert w\right\Vert ,
\label{wRemarkable}%
\end{equation}
where it is assumed that the weight function $w$ satisfies the so-called
summability condition
\begin{equation}
\left\Vert w\right\Vert :=\sum_{k=0}^{\infty}\sum_{\ell=1}^{a_{k}}\int_{0}%
^{R}\left\vert w_{\left(  k,\ell\right)  }\left(  r\right)  \right\vert
rdr<\infty. \label{wNorm}%
\end{equation}
Hence, if the weight $w$ satisfies the summability condition, the cubature
coefficients satisfy the important stability inequality (\ref{wRemarkable}),
and experimental evidences show that \emph{all} coefficients are in general
very small. This inequality is proved in Theorem \ref{TChebyshev} by an
application of the famous Chebyshev extremal property for the Gauss-Jacobi
quadrature, see Theorem 4.1 in Chapter 4 of \cite{kreinNudelman}.

Let us give a short outline of the paper.

In Section \ref{SdiscretePoly} we provide basic properties of the discrete
polyharmonic cubature formulas: the summability condition for the weight
function implies that $I_{\left(  N,M,K\right)  }^{\text{poly}}$ are uniformly
bounded functionals (in the parameters $N,M,K)$ on the set of all polynomials.
Under this assumption it follows that for $N,M,K\longrightarrow\infty,$ the
value $I_{\left(  N,M,K\right)  }^{\text{poly}}\left(  f\right)  $ converges
to $I_{w}\left(  f\right)  $ for any function $f$ which is continuous on the
closed disk with radius $R.$ Moreover we show in Theorem \ref{TexactSpace}
that our cubature formula $I_{\left(  N,M,K\right)  }^{\text{poly}}$ is exact
for all polynomials of the type $f\left(  x\right)  =r^{2s+k}Y_{\left(
k,\ell\right)  }\left(  \varphi\right)  $ where $0\leq s\leq2N-1,$ $0\leq
k\leq M-1-K,$ and $\ell=1,2,...,a_{k},$ i.e.
\[
I_{\left(  N,M,K\right)  }^{\text{poly}}\left(  f\right)  =\int_{D_{R}%
}f\left(  x\right)  w\left(  x\right)  dx.
\]

Section \ref{SerrorPoly} is devoted to error estimates for the discrete
polyharmonic cubature. The error bounds are a sum of the error bounds of three
successive approximations: 1. the approximation of the weight function $w$ as
a Fourier series -- involving the parameter $K;$ 2. the approximation by the
one-dimensional quadrature formula in radial direction -- involving the
parameter $N;$ 3. the approximation by the Discrete Fourier Transform --
involving the parameter $M.$

In Section \ref{Sexperiments} we provide experimental results for the discrete
polyharmonic cubature with respect to the first weight function $w^{\left(
1\right)  }$ in (\ref{eqweight}) for four different types of test functions,
and compare the results with those obtained by the piece-wise midpoint rule
(\cite{DeBa01}) and the rule of Peirce (\cite{Peir57}), which are two methods
used widely in practice, in particular in the Meshless Petrov-Galerkin method,
see \cite{DeBa00}, \cite{DeBa01}, \cite{MaPi10}, \cite{MFPG07}. Our methods
have much higher accuracy than all other methods which might be explained by
the fact that the weight function $w^{\left(  1\right)  }$ has a discontinuity
in $0$ which has a strong negative influence on the results of the usual
cubature formulas.

Section \ref{Sexperiments2} contains experimental results for the second
weight function $w^{\left(  2\right)  }$ in (\ref{eqweight}). Since the weight
function $w^{\left(  2\right)  }$ has an infinite Fourier series it has been
expected that the discrete polyharmonic cubature now would perform weaker than
for the first weight function. However, even in this case the accuracy has
been extremely good provided that the parameter $K$ is large enough. Finally,
in Section \ref{Spractical} we comment on practical aspects for the numerical implementation.

\section{The discrete polyharmonic cubature \label{SdiscretePoly}}

At first we introduce the notion of pseudo-positive (or more general,
pseudo-definite) weight functions which plays a central role in the discrete
polyharmonic cubature formula.

\begin{definition}
Let $f\left(  x,y\right)  $ be a function on the disc with center $0$ and
radius $R$ which is continuous except for $0.$ Then $f$ is called
pseudo-definite if every Fourier coefficient $f_{\left(  k,\ell\right)
}\left(  r\right)  $ of $f$ has a definite sign, thus either $f_{\left(
k,\ell\right)  }\left(  r\right)  \geq0$ for all $r\in\left(  0,R\right)  $ or
$f_{\left(  k,\ell\right)  }\left(  r\right)  \leq0$ for all $r\in\left(
0,R\right)  .$ The function $f$ is called pseudo-positive if $f_{\left(
k,\ell\right)  }\left(  r\right)  \geq0$ for all $r\in\left(  0,R\right)  $
for all $k\in\mathbb{N}_{0},\ell=1,...,a_{k}$.
\end{definition}

A simple example of a pseudo-positive function is the Poisson kernel on the
unit disk given by
\[
P\left(  x,y\right)  =\frac{1-x^{2}-y^{2}}{\left(  x-1\right)  ^{2}+y^{2}%
}=\frac{1-r^{2}}{1-2r\cos\varphi+r^{2}}=1+\sum_{k=1}^{\infty}2r^{k}\cos
k\varphi.
\]

Let us recall that a \emph{cubature rule} is just an expression of the type
\[
C\left(  f\right)  =\sum_{j=1}^{N}c_{j}f\left(  x_{j}\right)
\]
where $x_{1},...,x_{N}$ are pairwise different points in $\mathbb{R}^{n}$,
called nodes, and $c_{j}\neq0$ are real constants called weights. An important
property of a cubature rule, which is also a basis for widely used method for
construction of cubature formulas, is the exactness for a subspace of
functions: a cubature formula $C$ is \emph{exact} on a subspace of functions
$U$ with respect to a weight function $w\left(  x\right)  $ if
\[
C\left(  f\right)  =\int f\left(  x\right)  w\left(  x\right)  dx\qquad\text{
for all }f\in U.
\]
If $U$ is the subspace $\mathcal{P}_{m}\mathbb{\ }$of all polynomials of
degree $\leq m$ then one says that a \emph{cubature formula }$C$\emph{\ has
degree }$m$ if $C$ is exact on $\mathcal{P}_{m}$ and if there exists a
polynomial $f$ of degree $m+1$ such that $C\left(  f\right)  \neq I_{w}\left(
f\right)  .$ Exactness of degree $m$ is also useful for providing error
estimates using Taylor series, an approach which was emphasized already by R.
von Mises, see \cite{Mise36}, \cite{Mise53}.

Recall that the \textbf{Discrete Polyharmonic Cubature formula } with
parameters $\left(  N,M,K\right)  $ is defined for any continuous function $f$
on the disk of radius $R$ by formula (\ref{DiscrPolyhCubature+}):
\begin{equation}
I_{\left(  N,M,K\right)  }^{\text{poly}}\left(  f\right)  :=\frac{\pi}{M}%
{\displaystyle\sum_{k=0}^{K}}
{\displaystyle\sum_{\ell=1}^{a_{k}}}
{\displaystyle\sum_{j=1}^{N}}
{\displaystyle\sum_{s=1}^{M}}
\lambda_{j,\left(  k,\ell\right)  }t_{j,\left(  k,\ell\right)  }^{-\frac{k}%
{2}}Y_{\left(  k,\ell\right)  }\left(  \frac{2\pi s}{M}\right)  \times
f\left(  \sqrt{t_{j,\left(  k,\ell\right)  }}e^{2\pi i\frac{s}{M}}\right)  .
\label{DiscrPolyhCubature2}%
\end{equation}
Thus $I_{\left(  N,M,K\right)  }^{\text{poly}}\left(  f\right)  $ is a
cubature formula where the weights are \emph{real} numbers. At first we
establish the following subtle estimate (analogous to an estimate proved for
the polyharmonic cubature formula in \cite{kounchevRenderArxiv}\footnote{The
notations in this reference differ from the present: Here we have put
$d\mu_{\left(  k,\ell\right)  }\left(  r\right)  =\int_{0}^{2\pi}Y_{\left(
k,\ell\right)  }\left(  \varphi\right)  d\mu\left(  re^{i\varphi}\right)  $
while in \cite{kounchevRenderArxiv} we used to work with .$d\mu_{\left(
k,\ell\right)  }\left(  r\right)  =r^{k}\int_{0}^{2\pi}Y_{\left(
k,\ell\right)  }\left(  \varphi\right)  d\mu\left(  re^{i\varphi}\right)  .$}):

\begin{theorem}
\label{TChebyshev} Let $w$ be a pseudo-definite weight function with Fourier
coefficients $w_{\left(  k,\ell\right)  }$, and let $f$ be bounded on the
closed disk of radius $R$ with supremum norm%
\[
\left\Vert f\right\Vert _{\infty}:=\sup_{x^{2}+y^{2}\leq R^{2}}\left\vert
f\left(  x,y\right)  \right\vert .
\]
Then
\[
\left\vert I_{\left(  N,M,K\right)  }^{\text{poly}}\left(  f\right)
\right\vert \leq\sqrt{\pi}\left\Vert f\right\Vert _{\infty}%
{\displaystyle\sum_{k=0}^{K}}
{\displaystyle\sum_{\ell=1}^{a_{k}}}
\int_{0}^{R}\left\vert w_{\left(  k,\ell\right)  }\left(  r\right)
\right\vert rdr\leq\sqrt{\pi}\left\Vert f\right\Vert _{\infty}\left\Vert
w\right\Vert .
\]
Also, the coefficients of formula (\ref{DiscrPolyhCubature2}) satisfy
inequality (\ref{wRemarkable}).
\end{theorem}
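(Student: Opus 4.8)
The plan is to estimate the absolute value of $I_{(N,M,K)}^{\text{poly}}(f)$ starting from the representation (\ref{DiscrPolyhCubature}) rather than the fully expanded form (\ref{DiscrPolyhCubature+}), since the discrete Fourier coefficient $f_{(k,\ell)}^{(M)}$ is bounded in a clean way. First I would observe that for every $(k,\ell)$ and every $r$,
\[
\left\vert f_{(k,\ell)}^{(M)}(r)\right\vert =\left\vert \frac{2\pi}{M}\sum_{s=1}^{M} f\!\left(re^{i2\pi s/M}\right)Y_{(k,\ell)}\!\left(\tfrac{2\pi s}{M}\right)\right\vert \leq \frac{2\pi}{M}\sum_{s=1}^{M}\left\Vert f\right\Vert_{\infty}\left\vert Y_{(k,\ell)}\!\left(\tfrac{2\pi s}{M}\right)\right\vert \leq \frac{2\pi}{M}\cdot M\cdot \frac{1}{\sqrt{\pi}}\left\Vert f\right\Vert_{\infty}=2\sqrt{\pi}\left\Vert f\right\Vert_{\infty},
\]
using that $|Y_{(k,\ell)}(\varphi)|\le 1/\sqrt{\pi}$ for all $k\ge1$ (and $1/\sqrt{2\pi}\le 1/\sqrt{\pi}$ for $k=0$). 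Hence, after taking absolute values in (\ref{DiscrPolyhCubature}) and pulling the bound on $f_{(k,\ell)}^{(M)}$ out,
\[
\left\vert I_{(N,M,K)}^{\text{poly}}(f)\right\vert \leq \frac{1}{2}\cdot 2\sqrt{\pi}\left\Vert f\right\Vert_{\infty}\sum_{k=0}^{K}\sum_{\ell=1}^{a_{k}}\sum_{j=1}^{N}\left\vert \lambda_{j,(k,\ell)}\right\vert\, t_{j,(k,\ell)}^{-k/2}.
\]

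The crux is therefore to show, for each fixed $(k,\ell)$, that
\[
\sum_{j=1}^{N}\left\vert \lambda_{j,(k,\ell)}\right\vert\, t_{j,(k,\ell)}^{-k/2}\leq \int_{0}^{R}\left\vert w_{(k,\ell)}(r)\right\vert r\,dr.
\]
This is where the Chebyshev extremal property of the Gauss–Jacobi quadrature (Theorem 4.1, Chapter 4 of \cite{kreinNudelman}) enters. Recall the Gauss–Jacobi nodes/weights $(t_{j,(k,\ell)},\lambda_{j,(k,\ell)})$ are those of the measure $\tfrac12\rho^{k/2}w_{(k,\ell)}(\sqrt\rho)\,d\rho$ on $(0,R^{2})$; by the General Assumption this measure is of one sign, so all $\lambda_{j,(k,\ell)}$ share that sign, and $|\lambda_{j,(k,\ell)}|\,t_{j,(k,\ell)}^{-k/2}=\bigl|\lambda_{j,(k,\ell)}\,t_{j,(k,\ell)}^{-k/2}\bigr|$. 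The quantity $\sum_j \lambda_{j,(k,\ell)}\,t_{j,(k,\ell)}^{-k/2}$ is exactly what the Gauss–Jacobi rule assigns to the function $\rho\mapsto\rho^{-k/2}$ against the measure $\tfrac12\rho^{k/2}w_{(k,\ell)}(\sqrt\rho)\,d\rho$; formally this "integrates" to $\tfrac12\int_0^{R^2}w_{(k,\ell)}(\sqrt\rho)\,d\rho=\int_0^R w_{(k,\ell)}(r)r\,dr$. The Chebyshev extremal property makes this rigorous: the partial sums of Gauss–Jacobi weights (ordered by node) are squeezed between the values of the distribution function of the measure, so the total mass $\sum_j|\lambda_{j,(k,\ell)}\,t_{j,(k,\ell)}^{-k/2}|$ of the "rescaled" quadrature is bounded by the total variation $\int_0^R|w_{(k,\ell)}(r)|r\,dr$ of the corresponding measure. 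I would phrase this by applying the extremal property to the measure $d\sigma_{(k,\ell)}$ with $d\sigma_{(k,\ell)}(\rho)=\tfrac12 w_{(k,\ell)}(\sqrt\rho)\,d\rho$, noting the Gauss–Jacobi rule for $\tfrac12\rho^{k/2}w_{(k,\ell)}(\sqrt\rho)\,d\rho$ applied to $\rho^{-k/2}f(\rho)$ coincides with a quadrature for $d\sigma_{(k,\ell)}$ applied to $f$, and then invoking that Gauss-type quadratures never have total weight exceeding the total mass of the (signed) measure.

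Summing the per-$(k,\ell)$ bound over $0\le k\le K$, $1\le\ell\le a_k$ gives
\[
\left\vert I_{(N,M,K)}^{\text{poly}}(f)\right\vert \leq \sqrt{\pi}\left\Vert f\right\Vert_{\infty}\sum_{k=0}^{K}\sum_{\ell=1}^{a_{k}}\int_{0}^{R}\left\vert w_{(k,\ell)}(r)\right\vert r\,dr,
\]
and the second inequality in the theorem is then immediate from the summability definition (\ref{wNorm}) of $\left\Vert w\right\Vert$, since the partial sum over $k\le K$ is at most the full sum. For the final sentence, inequality (\ref{wRemarkable}) is obtained by taking $f\equiv1$ — no, more carefully, by repeating the estimate above but keeping the absolute values inside: replacing $f(\sqrt{t_{j,(k,\ell)}}e^{2\pi is/M})$ by $1$ in (\ref{DiscrPolyhCubature+}) and bounding $|Y_{(k,\ell)}(2\pi s/M)|\le 1/\sqrt\pi$ term by term, the same chain of inequalities yields $\tfrac{\pi}{M}\sum_{k,\ell,j,s}|\lambda_{j,(k,\ell)}\,t_{j,(k,\ell)}^{-k/2}\,Y_{(k,\ell)}(2\pi s/M)|\le\sqrt\pi\sum_{k\le K,\ell}\int_0^R|w_{(k,\ell)}(r)|r\,dr\le\sqrt\pi\left\Vert w\right\Vert$.

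The main obstacle I anticipate is making the Chebyshev-extremal-property step airtight: one must be precise about which measure the Gauss–Jacobi rule is attached to, handle the factor $t^{-k/2}$ (which is the source of the name in \cite{kounchevRenderArkiv}) by absorbing it into the weight $\rho^{k/2}$, and cite the extremal property in the form that bounds the total variation of the discrete quadrature measure by that of the continuous one; the rest is routine triangle-inequality bookkeeping and the trivial bound $|Y_{(k,\ell)}|\le 1/\sqrt\pi$.
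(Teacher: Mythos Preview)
Your overall architecture matches the paper exactly: bound $|Y_{(k,\ell)}|\le 1/\sqrt{\pi}$ and $|f|\le\|f\|_\infty$ to reduce to
\[
\left|I_{(N,M,K)}^{\text{poly}}(f)\right|\le \sqrt{\pi}\,\|f\|_\infty\sum_{k=0}^{K}\sum_{\ell=1}^{a_k}\sum_{j=1}^{N}|\lambda_{j,(k,\ell)}|\,t_{j,(k,\ell)}^{-k/2},
\]
and then control the inner sum via the Chebyshev extremal property. The bookkeeping for (\ref{wRemarkable}) is also fine.

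The one place where your argument is not yet airtight is precisely the step you flagged. Your plan is to view $\sum_j \lambda_{j,(k,\ell)} t_{j,(k,\ell)}^{-k/2} g(t_{j,(k,\ell)})$ as ``a quadrature for $d\sigma_{(k,\ell)}=\tfrac12 w_{(k,\ell)}(\sqrt\rho)\,d\rho$'' and then invoke that ``Gauss-type quadratures never have total weight exceeding the total mass.'' But the rescaled rule with nodes $t_{j,(k,\ell)}$ and weights $\lambda_{j,(k,\ell)} t_{j,(k,\ell)}^{-k/2}$ is \emph{not} the Gauss rule for $d\sigma_{(k,\ell)}$: its nodes are the zeros of the orthogonal polynomials for $\rho^{k/2}w_{(k,\ell)}(\sqrt\rho)\,d\rho$, not for $w_{(k,\ell)}(\sqrt\rho)\,d\rho$, and it is not exact on polynomials of degree $2N-1$ against $d\sigma_{(k,\ell)}$. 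So no off-the-shelf statement about Gauss quadratures bounds its total weight.

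The paper's remedy is cleaner and avoids this reinterpretation altogether: keep the original measure $\tfrac12\rho^{k/2}|w_{(k,\ell)}(\sqrt\rho)|\,d\rho$ and apply the extremal inequality in the form $G_N^{(k,\ell)}(h)\le I_{(k,\ell)}(h)$ valid for any $h$ with $h^{(2N)}\ge 0$ on $(0,R^2)$, directly to the \emph{function} $h(\rho)=\rho^{-k/2}$. Since
\[
\frac{d^{2N}}{d\rho^{2N}}\rho^{-k/2}=\frac{k}{2}\Bigl(\frac{k}{2}+1\Bigr)\cdots\Bigl(\frac{k}{2}+2N-1\Bigr)\rho^{-k/2-2N}\ge 0,
\]
the hypothesis is met, and one reads off
\[
\sum_{j=1}^{N}|\lambda_{j,(k,\ell)}|\,t_{j,(k,\ell)}^{-k/2}\le \tfrac12\int_0^{R^2}\rho^{-k/2}\cdot\rho^{k/2}|w_{(k,\ell)}(\sqrt\rho)|\,d\rho=\int_0^{R}|w_{(k,\ell)}(r)|\,r\,dr,
\]
which is exactly the per-$(k,\ell)$ bound you need. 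Replace your ``absorb into the measure'' paragraph by this direct application and the proof is complete.
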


\begin{proof}
Since $\left\vert Y_{\left(  k,\ell\right)  }\left(  \varphi\right)
\right\vert \leq1/\sqrt{\pi}$ for all $\varphi\in\left[  0,2\pi\right]  $ the
following estimate follows directly from (\ref{DiscrPolyhCubature2}):
\[
\left\vert I_{\left(  N,M,K\right)  }^{\text{poly}}\left(  f\right)
\right\vert \leq\sqrt{\pi}\left\Vert f\right\Vert _{\infty}%
{\displaystyle\sum_{k=0}^{K}}
{\displaystyle\sum_{\ell=1}^{a_{k}}}
{\displaystyle\sum_{j=1}^{N}}
\left\vert \lambda_{j,\left(  k,\ell\right)  }\right\vert t_{j,\left(
k,\ell\right)  }^{-\frac{k}{2}}.
\]
Following (\ref{eqexactkl}) we define the function $G_{N}^{\left(
k,\ell\right)  }\left(  h\right)  :=%
{\displaystyle\sum_{j=1}^{N}}
\left\vert \lambda_{j,\left(  k,\ell\right)  }\right\vert h\left(
t_{j,\left(  k,\ell\right)  }\right)  ,$ which due to the pseudo-definiteness
of $w$, is the $N$-point Gauss-Jacobi quadrature for the integral
\[
I_{\left(  k,\ell\right)  }\left(  h\right)  :=\frac{1}{2}\int_{0}^{R^{2}%
}h\left(  \rho\right)  \cdot\rho^{k/2}\left\vert w_{\left(  k,\ell\right)
}\left(  \sqrt{\rho}\right)  \right\vert d\rho.
\]
Now we apply the Chebyshev extremal property of the Gau\ss --Jacobi quadrature
(see Theorem 4.1 in Chapter 4 of \cite{kreinNudelman}) which shows that
\[
G_{N}^{\left(  k,\ell\right)  }\left(  h\right)  \leq I_{\left(
k,\ell\right)  }\left(  h\right)
\]
holds for any $2N-$smooth function $h\left(  r\right)  $ with $h^{\left(
2N\right)  }\left(  \rho\right)  \geq0$ for all $\rho>0.$ Applying this to the
function $h\left(  \rho\right)  =\rho^{-k/2},$ gives the inequality
\begin{equation}
G_{N}^{\left(  k,\ell\right)  }\left(  \rho^{-k/2}\right)  =%
{\displaystyle\sum_{j=1}^{N}}
\left\vert \lambda_{j,\left(  k,\ell\right)  }\right\vert t_{j,\left(
k,\ell\right)  }^{-\frac{k}{2}}\leq\frac{1}{2}\int_{0}^{R^{2}}\left\vert
w_{\left(  k,\ell\right)  }\left(  \sqrt{\rho}\right)  \right\vert d\rho
=\int_{0}^{R}\left\vert w_{\left(  k,\ell\right)  }\left(  r\right)
\right\vert rdr. \label{GNkl}%
\end{equation}

\end{proof}

From Theorem \ref{TChebyshev} by using standard results from functional
analysis (see \cite[p. 351]{Davis}) we obtain the following important:

\begin{corollary}
Suppose that the Fourier coefficients of the pseudo-definite weight function
$w$ satisfy the summability condition (see (\ref{wNorm}))
\[
\left\Vert w\right\Vert <\infty.
\]
Then the discrete polyharmonic cubature $I_{\left(  N,M,K\right)
}^{\text{poly}}\left(  f\right)  $ converges to the integral $I_{w}\left(
f\right)  $ for each continuous function on the closed disc with radius $R$
when $N,M,K$ are approaching infinity.
\end{corollary}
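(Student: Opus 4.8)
Here is how I would prove the Corollary. The plan is to obtain it from Theorem~\ref{TChebyshev} by the classical P\'olya--Steklov convergence argument for quadrature (which is exactly the piece of functional analysis cited from \cite[p.~351]{Davis}): a family of linear functionals on $C\left(\overline{D_{R}}\right)$ that is uniformly bounded and converges on a dense subspace converges everywhere, and to the same limit.

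First I would check that the target functional $I_{w}$ is itself well defined and bounded on $C\left(\overline{D_{R}}\right)$. Since $\left\vert Y_{\left(k,\ell\right)}\left(\varphi\right)\right\vert\le1/\sqrt{\pi}$, the Fourier series (\ref{FourierSeries}) gives the pointwise bound $\left\vert w\left(re^{i\varphi}\right)\right\vert\le\frac{1}{\sqrt{\pi}}\sum_{k,\ell}\left\vert w_{\left(k,\ell\right)}\left(r\right)\right\vert$, so integrating over $D_{R}$ shows $w\in L^{1}\left(D_{R}\right)$ with $\int_{D_{R}}\left\vert w\right\vert\le2\sqrt{\pi}\left\Vert w\right\Vert$; hence $\left\vert I_{w}\left(f\right)\right\vert\le2\sqrt{\pi}\left\Vert w\right\Vert\left\Vert f\right\Vert_{\infty}$. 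On the other hand Theorem~\ref{TChebyshev} gives $\left\vert I_{\left(N,M,K\right)}^{\text{poly}}\left(f\right)\right\vert\le\sqrt{\pi}\left\Vert w\right\Vert\left\Vert f\right\Vert_{\infty}$ for \emph{every} choice of $\left(N,M,K\right)$, so the whole family is uniformly bounded.

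Next I would establish convergence -- in fact, eventual \emph{exactness} -- on the dense subspace of polynomials. Let $p$ be a polynomial of degree $d$, and let $N_{p}\le d$ be the highest harmonic occurring in its Almansi decomposition (\ref{Almansi}). If $K\ge N_{p}$ and $M\ge N_{p}+K+1$, then the $M$-point discrete transform (\ref{DFT}) is free of aliasing on $p$, i.e. $f_{\left(k,\ell\right)}^{\left(M\right)}=f_{\left(k,\ell\right)}$ for all $0\le k\le K$; since moreover $f_{\left(k,\ell\right)}=0$ for $k>N_{p}$, the finite sum defining $I_{\left(N,M,K\right)}^{\text{poly}}\left(p\right)$ in (\ref{DiscrPolyhCubature}) coincides term by term with $I_{N}^{\text{poly}}\left(p\right)$ from (\ref{eqIpoly}); and by (\ref{Iwf2}) the latter equals $I_{w}\left(p\right)$ as soon as $2N-1\ge d$ (so that the Gauss--Jacobi exactness (\ref{eqexactkl}) applies to every $\widetilde p_{\left(k,\ell\right)}$). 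Equivalently, this is the content of Theorem~\ref{TexactSpace}. Thus $I_{\left(N,M,K\right)}^{\text{poly}}\left(p\right)\to I_{w}\left(p\right)$ along any joint limit in which $N,K\to\infty$ and $M-K\to\infty$.

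Finally the $\varepsilon/3$ estimate. Given $f\in C\left(\overline{D_{R}}\right)$ and $\varepsilon>0$, the Weierstrass theorem supplies a polynomial $p$ with $\left\Vert f-p\right\Vert_{\infty}<\varepsilon$, and we decompose
\[
I_{\left(N,M,K\right)}^{\text{poly}}\left(f\right)-I_{w}\left(f\right)=I_{\left(N,M,K\right)}^{\text{poly}}\left(f-p\right)+\left(I_{\left(N,M,K\right)}^{\text{poly}}\left(p\right)-I_{w}\left(p\right)\right)+I_{w}\left(p-f\right).
\]
The first term is $\le\sqrt{\pi}\left\Vert w\right\Vert\varepsilon$ and the third is $\le2\sqrt{\pi}\left\Vert w\right\Vert\varepsilon$ by the bounds above, while the middle term vanishes once $N,M,K$ are large enough (in the sense of the previous paragraph); hence $\left\vert I_{\left(N,M,K\right)}^{\text{poly}}\left(f\right)-I_{w}\left(f\right)\right\vert\le3\sqrt{\pi}\left\Vert w\right\Vert\varepsilon$ for all such $N,M,K$, and letting $\varepsilon\to0$ finishes the proof. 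The one point that genuinely needs care -- and that I would flag as the crux -- is the coupling between $M$ and $K$ in the middle step: because (\ref{DFT}) aliases the $k$-th harmonic with the harmonics $k\pm M$, the cubature is exact on a fixed polynomial only once $M>N_{p}+K$. So the statement ``$N,M,K\to\infty$'' must be read with $M$ growing at least as fast as $K$ (equivalently, staying inside the exactness range $k\le M-1-K$ of Theorem~\ref{TexactSpace}); with that natural convention the P\'olya--Steklov argument applies verbatim.
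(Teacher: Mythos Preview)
Your proposal is correct and is exactly the argument the paper invokes: the paper gives no proof beyond citing Theorem~\ref{TChebyshev} together with the P\'olya--Steklov convergence criterion from \cite[p.~351]{Davis}, and you have spelled out precisely that uniform-boundedness-plus-density argument. Your careful remark about the coupling $M-K\to\infty$ (needed for the exactness route via Theorem~\ref{TexactSpace}) is a subtlety the paper leaves implicit in the phrase ``$N,M,K$ approaching infinity.''
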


Let us recall that the \emph{trapezoidal sum} (see \cite[p. 111]{DaRa75}),
also called the \emph{composite trapezoidal rule} in \cite[p. 155]{Gaut97}, is
defined on the set of all $2\pi$-periodic functions $g$ on the line, by
\begin{equation}
T_{M}\left(  g\right)  :=\frac{2\pi}{M}%
{\displaystyle\sum_{s=1}^{M}}
g\left(  \frac{2\pi s}{M}\right)  . \label{trapezoidal}%
\end{equation}
Let now $f\left(  x,y\right)  $ be defined on the disk with radius $R$ and
define $f_{r}\left(  \varphi\right)  :=f\left(  r\cos\varphi,r\sin
\varphi\right)  $ for $r\in\left[  0,R\right]  .$ Then
\[
T_{M}\left(  f_{r}\cdot Y_{\left(  k,\ell\right)  }\right)  =\frac{2\pi}{M}%
{\displaystyle\sum_{s=1}^{M}}
f\left(  r\cos\left(  \frac{2\pi s}{M}\right)  ,r\sin\left(  \frac{2\pi s}%
{M}\right)  \right)  Y_{\left(  k,\ell\right)  }\left(  \frac{2\pi ks}%
{M}\right)
\]
and the discrete polyharmonic cubature (\ref{DiscrPolyhCubature}) can be
written as
\begin{equation}
I_{\left(  N,M,K\right)  }^{\text{poly}}\left(  f\right)  =\frac{1}{2}%
{\displaystyle\sum_{k=0}^{K}}
{\displaystyle\sum_{\ell=1}^{a_{k}}}
{\displaystyle\sum_{j=1}^{N}}
\lambda_{j,\left(  k,\ell\right)  }\times t_{j,\left(  k,\ell\right)
}^{-\frac{k}{2}}T_{M}\left(  f_{\sqrt{t_{j,\left(  k,\ell\right)  }}%
}Y_{\left(  k,\ell\right)  }\right)  . \label{DiscrPolyhCubatureTrapez}%
\end{equation}

For the following result recall that $r^{k}Y_{\left(  k,\ell\right)  }\left(
\varphi\right)  $ is a polynomial of degree $k$ in the variables
$x=r\cos\varphi$ and $y=r\sin\varphi,$ cf. \cite{okbook}, \cite{sobolev}.

\begin{theorem}
\label{TexactSpace} Let $M$ and $K$ be natural numbers satisfying $M>K.$ Then
the discrete polyharmonic cubature $I_{\left(  N,M,K\right)  }^{\text{poly}}$
with parameters $\left(  N,M,K\right)  $ is exact on the linear subspace
generated by the polynomials
\[
r^{2s+k}Y_{\left(  k,\ell\right)  }\left(  \varphi\right)
\]
where $0\leq s\leq2N-1,$ $k\leq M-1-K,$ and $\ell=1,...,a_{k}.$
\end{theorem}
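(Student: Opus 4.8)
The plan is to verify exactness directly on a generating polynomial $f(x) = r^{2s+k_{0}}Y_{(k_{0},\ell_{0})}(\varphi)$ with $0\le s\le 2N-1$ and $k_{0}\le M-1-K$, and then invoke linearity. First I would compute the exact integral: by orthonormality of the $Y_{(k,\ell)}$, formula (\ref{eqcentral2}) collapses to the single term $\int_{0}^{R} r^{2s+k_{0}} w_{(k_{0},\ell_{0})}(r)\, r\,dr$, since the $(k,\ell)$-th Fourier coefficient of $f$ is $r^{2s+k_{0}}$ when $(k,\ell)=(k_{0},\ell_{0})$ and zero otherwise. So $I_{w}(f) = \int_{0}^{R} r^{2s+k_{0}} w_{(k_{0},\ell_{0})}(r)\, r\,dr$.

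Next I would evaluate $I_{(N,M,K)}^{\mathrm{poly}}(f)$ using the trapezoidal form (\ref{DiscrPolyhCubatureTrapez}). The key sub-step is to identify $T_{M}(f_{\rho}\, Y_{(k,\ell)})$ for each fixed radius $\rho$ and index $(k,\ell)$ with $k\le K$. Here $f_{\rho}(\varphi) = \rho^{2s+k_{0}} Y_{(k_{0},\ell_{0})}(\varphi)$, so $f_{\rho}\cdot Y_{(k,\ell)}$ is $\rho^{2s+k_{0}}$ times a product of two trigonometric functions of frequencies $k_{0}$ and $k$, i.e.\ a trigonometric polynomial of degree at most $k_{0}+k \le (M-1-K)+K = M-1 < M$. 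The composite trapezoidal rule $T_{M}$ integrates every trigonometric monomial $e^{im\varphi}$ with $|m|<M$ (equivalently, with $m\not\equiv 0 \bmod M$ it gives $0$, and for $m=0$ it gives $2\pi$) exactly against $d\varphi$ over $[0,2\pi]$; since our product has all frequencies in the range $(-M,M)$, the rule is exact, and $T_{M}(f_{\rho} Y_{(k,\ell)}) = \int_{0}^{2\pi} f_{\rho}(\varphi) Y_{(k,\ell)}(\varphi)\, d\varphi$, which by orthonormality equals $\rho^{2s+k_{0}}\,\delta_{(k,\ell),(k_{0},\ell_{0})}$. I should double-check the boundary count — the worst case is $k_{0}=M-1-K$ and $k=K$, giving total frequency $M-1$, which is strictly less than $M$, so the aliasing threshold is not reached and exactness holds; this is the point most likely to need care.

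Substituting back, only the terms with $(k,\ell)=(k_{0},\ell_{0})$ survive in (\ref{DiscrPolyhCubatureTrapez}), and we get
\[
I_{(N,M,K)}^{\mathrm{poly}}(f) = \frac{1}{2}\sum_{j=1}^{N}\lambda_{j,(k_{0},\ell_{0})}\, t_{j,(k_{0},\ell_{0})}^{-k_{0}/2}\, t_{j,(k_{0},\ell_{0})}^{(2s+k_{0})/2} = \frac{1}{2}\sum_{j=1}^{N}\lambda_{j,(k_{0},\ell_{0})}\, t_{j,(k_{0},\ell_{0})}^{s}.
\]
Since $0\le s\le 2N-1$, the exactness relation (\ref{eqexactkl}) of the $N$-point Gauss--Jacobi quadrature for the measure $\rho^{k_{0}/2} w_{(k_{0},\ell_{0})}(\sqrt{\rho})\, d\rho$ applies and gives exactly $\int_{0}^{R} r^{2s} r^{k_{0}} w_{(k_{0},\ell_{0})}(r)\, r\, dr$, which matches $I_{w}(f)$ computed above. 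Finally, by linearity of both $I_{w}$ and $I_{(N,M,K)}^{\mathrm{poly}}$, exactness extends from the generators to the whole linear subspace they span, completing the proof. The only genuinely delicate point is the frequency bookkeeping in the trapezoidal step; everything else is routine orthonormality and the already-established Gauss--Jacobi exactness.
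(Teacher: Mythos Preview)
Your proof is correct and follows essentially the same route as the paper's: both verify exactness on a single generator $r^{2s+k_{0}}Y_{(k_{0},\ell_{0})}$, use that the trapezoidal rule $T_{M}$ is exact on trigonometric polynomials of degree at most $M-1$ to reduce the discrete Fourier coefficient to the Kronecker delta $\delta_{(k,\ell),(k_{0},\ell_{0})}$, and then invoke the Gauss--Jacobi exactness relation (\ref{eqexactkl}). Your careful check of the boundary case $k_{0}+k=(M-1-K)+K=M-1$ is exactly the degree count the paper uses.
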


\begin{proof}
We shall use that $T_{M}\left(  g\right)  =\int_{0}^{2\pi}g\left(
\varphi\right)  d\varphi$ for all trigonometric polynomials $g$ of degree
$k\leq M-1$, see e.g. \cite{DaRa75}, p. $110$. Let $k\leq K$ be a natural
number and let $k_{1}\leq M-1-K$ be a natural number. Then it is easy to see
that the product $Y_{\left(  k_{1},\ell_{1}\right)  }\left(  \varphi\right)
Y_{\left(  k,\ell\right)  }\left(  \varphi\right)  $ is a trigonometric
polynomial of degree $k_{1}+k\leq M-1.$ It follows that
\begin{equation}
T_{M,0}\left(  Y_{\left(  k_{1},\ell_{1}\right)  }Y_{\left(  k,\ell\right)
}\right)  =\int_{0}^{2\pi}Y_{\left(  k_{1},\ell_{1}\right)  }\left(
\varphi\right)  Y_{\left(  k,\ell\right)  }\left(  \varphi\right)
d\varphi=\delta_{k,k_{1}}\delta_{\ell,\ell_{1}}, \label{TMYkl}%
\end{equation}
where $\delta_{i,j}$ is the Kronecker symbol. Now we consider the polynomial
\[
f^{\left(  s,k_{1},\ell_{1}\right)  }\left(  r\cos\varphi,r\sin\varphi\right)
=r^{2s}r^{k_{1}}Y_{\left(  k_{1},\ell_{1}\right)  }\left(  \varphi\right)
\]
with $s\leq2N-1$ and $k_{1}\leq M-1-K.$ Then by (\ref{TMYkl}) we obtain
\begin{align*}
I_{\left(  N,M,K\right)  }^{\text{poly}}\left(  f^{\left(  s,k_{1},\ell
_{1}\right)  }\right)   &  =\frac{1}{2}%
{\displaystyle\sum_{k=0}^{K}}
{\displaystyle\sum_{\ell=1}^{a_{k}}}
{\displaystyle\sum_{j=1}^{N}}
\lambda_{j,\left(  k,\ell\right)  }\times t_{j,\left(  k,\ell\right)
}^{-\frac{k}{2}}T_{M,0}\left(  t_{j,\left(  k,\ell\right)  }^{s}\left(
\sqrt{t_{j,\left(  k,\ell\right)  }}\right)  ^{k_{1}}Y_{\left(  k_{1},\ell
_{1}\right)  }Y_{\left(  k,\ell\right)  }\right) \\
&  =\frac{1}{2}%
{\displaystyle\sum_{k=0}^{K}}
{\displaystyle\sum_{\ell=1}^{a_{k}}}
{\displaystyle\sum_{j=1}^{N}}
\lambda_{j,\left(  k,\ell\right)  }t_{j,\left(  k,\ell\right)  }^{s}\int
_{0}^{2\pi}Y_{\left(  k_{1},\ell_{1}\right)  }\left(  \varphi\right)
Y_{\left(  k,\ell\right)  }\left(  \varphi\right)  d\varphi\\
&  =\frac{1}{2}%
{\displaystyle\sum_{j=1}^{N}}
\lambda_{j,\left(  k_{1},\ell_{1}\right)  }t_{j,\left(  k_{1},\ell_{1}\right)
}^{s}\\
&  =\int_{0}^{R}r^{2s+k_{1}}w_{\left(  k_{1},\ell_{1}\right)  }\left(
r\right)  rdr=\int_{0}^{R}f_{\left(  k_{1},\ell_{1}\right)  }^{\left(
s,k_{1},\ell_{1}\right)  }\left(  r\right)  w_{\left(  k_{1},\ell_{1}\right)
}\left(  r\right)  rdr
\end{align*}
In the last row we used formula (\ref{eqexactkl}), due to the fact that the
Gauss-Jacobi quadrature for each $\left(  k_{1},\ell_{1}\right)  $ is exact
for all polynomials of degree $2N-1.$ In the Fourier series expansion of the
polynomial $f^{\left(  s,k_{1},\ell_{1}\right)  }$ the only non-zero
coefficient is $f_{\left(  k_{1},\ell_{1}\right)  }^{\left(  s,k_{1},\ell
_{1}\right)  }\left(  r\right)  =r^{2s}r^{k_{1}}.$ Hence, by formulas
(\ref{Iwf2}), (\ref{eqIpoly}) we obtain
\[
I_{\left(  N,M,K\right)  }^{\text{poly}}\left(  f^{\left(  s,k_{1},\ell
_{1}\right)  }\right)  =I_{w}\left(  f^{\left(  s,k_{1},\ell_{1}\right)
}\right)  .
\]

\end{proof}

\begin{remark}
In \cite{Ahli62} one can find a nice account of the early history of numerical
multivariate integration until the 1950's, commencing with the work of J.C.
Maxwell \cite{Maxw1877} in 1877, of P. Appel \cite{Appe1890}, \cite{Appe26}
and H. Bourget \cite{Bour1898}. A. Ahlin \cite{Ahli62} stresses the fact that
for many ad hoc formulae there are no error estimates available, and he
provided error estimates for the case of product type measures. Since the
1950's the literature has grown considerably and very good surveys until the
1970's can be found in the books \cite{DaRa75}, \cite{Engels}, \cite{krylov},
\cite{sobolev2} and \cite{stroudBook}. For a recent survey on numerical
integration rules we refer to \cite{Cool03}. For numerical integration rules
especially for the disc we refer to \cite{CoHa87}, \cite{CoKi00},
\cite{Haeg76}, \cite{KiSo97},, \cite{VeCo92} and older work in \cite{AlCo58},
\cite{Albr60} , \cite{AlEn76}; see also \cite{vioreanuRokhlin},
\cite{xiaoGimbutas}.
\end{remark}

\section{Error Estimates for the discrete polyharmonic
cubature\label{SerrorPoly}}

In this section we derive error estimates for the discrete polyharmonic
cubature by considering the following chain of identities and approximations:
at first the integral (\ref{eqint}) is equal to (\ref{eqcentral2}), and a
simple change of variables leads to (\ref{r1}); then the infinite sum is
approximated by a finite sum (depending on $K),$ and next we employ the
Gauss-Jacobi quadratures (depending on $N),$ and finally we use the "Discrete
Fourier transform" (depending on $M)$ in order to obtain the discrete
polyharmonic cubature $I_{\left(  N,M,K\right)  }^{\text{poly}}\left(
f\right)  $:
\begin{align}
&  \int_{D_{R}}f\left(  x\right)  w\left(  x\right)  dx\nonumber\\
&  =\sum_{k,\ell}\int_{0}^{R}f_{\left(  k,\ell\right)  }\left(  r\right)
w_{\left(  k,\ell\right)  }\left(  r\right)  rdr=\frac{1}{2}\sum_{k=0}%
^{\infty}\sum_{\ell=1}^{a_{k}}\int_{0}^{R^{2}}f_{\left(  k,\ell\right)
}\left(  \sqrt{\rho}\right)  w_{\left(  k,\ell\right)  }\left(  \sqrt{\rho
}\right)  d\rho\label{r1}\\
&  \approx\frac{1}{2}\sum_{k=0}^{K}\sum_{\ell=1}^{a_{k}}\int_{0}^{R^{2}%
}f_{\left(  k,\ell\right)  }\left(  \sqrt{\rho}\right)  \rho^{-\frac{k}{2}%
}\times\rho^{\frac{k}{2}}w_{\left(  k,\ell\right)  }\left(  \sqrt{\rho
}\right)  d\rho\label{r2}\\
&  \approx\frac{1}{2}\sum_{k=0}^{K}\sum_{\ell=1}^{a_{k}}\sum_{j=1}%
^{N}f_{\left(  k,\ell\right)  }\left(  \sqrt{t_{j,\left(  k,\ell\right)  }%
}\right)  \times t_{j,\left(  k,\ell\right)  }^{-\frac{k}{2}}\lambda
_{j,\left(  k,\ell\right)  }=:I_{\left(  N,\infty,K\right)  }^{\text{poly}%
}\left(  f\right) \label{r3}\\
&  \approx\frac{1}{2}\sum_{k=0}^{K}\sum_{\ell=1}^{a_{k}}\sum_{j=1}%
^{N}f_{\left(  k,\ell\right)  }^{\left(  M\right)  }\left(  \sqrt{t_{j,\left(
k,\ell\right)  }}\right)  \times t_{j,\left(  k,\ell\right)  }^{-\frac{k}{2}%
}\lambda_{j,\left(  k,\ell\right)  }=I_{\left(  N,M,K\right)  }^{\text{poly}%
}\left(  f\right)  \label{r4}%
\end{align}

The error between formulas (\ref{r3}) and (\ref{r4}) can be estimated as follows:

\begin{theorem}
\label{TKapproximation} For $f\in C^{2D+1}\left(  B_{R}\right)  $ the
following estimate holds
\[
\left\vert I_{\left(  N,\infty,K\right)  }^{\text{poly}}\left(  f\right)
-I_{\left(  N,M,K\right)  }^{\text{poly}}\left(  f\right)  \right\vert
\leq\frac{2\pi\zeta\left(  2D+1\right)  }{M^{2D+1}}%
{\displaystyle\sum_{k=0}^{K}}
{\displaystyle\sum_{\ell=1}^{a_{k}}}
M_{k,\ell}\left(  f\right)  \int_{0}^{R}w_{\left(  k,\ell\right)  }\left(
r\right)  rdr
\]
where
\[
M_{k,\ell}\left(  f\right)  :=\sup_{r\in\left[  0,R\right]  }\sup_{\varphi
\in\left[  0,2\pi\right]  }\left\vert \frac{d^{2D+1}}{d\varphi^{2D+1}}\left[
f\left(  re^{i\varphi}\right)  Y_{\left(  k,\ell\right)  }\left(
\varphi\right)  \right]  \right\vert .
\]
Here $\zeta$ denotes the Riemann zeta function.
\end{theorem}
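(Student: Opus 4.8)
The key observation is that the difference between $I_{(N,\infty,K)}^{\mathrm{poly}}$ and $I_{(N,M,K)}^{\mathrm{poly}}$ is controlled term‑by‑term by the error of the trapezoidal rule $T_M$ applied to the smooth $2\pi$‑periodic function $\varphi\mapsto f(re^{i\varphi})Y_{(k,\ell)}(\varphi)$ at the radius $r=\sqrt{t_{j,(k,\ell)}}$. Indeed, comparing (\ref{r3}) and (\ref{r4}), and writing $I_{(N,M,K)}^{\mathrm{poly}}$ in the form (\ref{DiscrPolyhCubatureTrapez}) while recognizing from (\ref{fkl}) and (\ref{DiscrPolyhCubatureTrapez}) that $f_{(k,\ell)}(r)=\int_0^{2\pi}f(re^{i\varphi})Y_{(k,\ell)}(\varphi)\,d\varphi$ is exactly the integral approximated by $T_M(f_r Y_{(k,\ell)})$, we get
\[
I_{(N,\infty,K)}^{\mathrm{poly}}(f)-I_{(N,M,K)}^{\mathrm{poly}}(f)
=\frac12\sum_{k=0}^{K}\sum_{\ell=1}^{a_k}\sum_{j=1}^{N}
\lambda_{j,(k,\ell)}\,t_{j,(k,\ell)}^{-k/2}\,
\Bigl[\,\textstyle\int_0^{2\pi}\!-\,T_M\Bigr]\bigl(f_{\sqrt{t_{j,(k,\ell)}}}Y_{(k,\ell)}\bigr).
\]
So the first step is just to record this identity. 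The second step is to invoke the standard error estimate for the composite trapezoidal rule on a $C^{2D+1}$ periodic function $g$: via its Fourier expansion and aliasing, $\lvert\int_0^{2\pi}g-T_M(g)\rvert\le \frac{2\pi}{M^{2D+1}}\,\zeta(2D+1)\,\sup\lvert g^{(2D+1)}\rvert$ (this is the classical bound obtained by bounding the Fourier coefficients $\widehat g(n)$ by $\sup\lvert g^{(2D+1)}\rvert/\lvert n\rvert^{2D+1}$ and summing the aliased tail over multiples of $M$). Applying this with $g=f_{\sqrt{t_{j,(k,\ell)}}}Y_{(k,\ell)}$ gives the per‑term bound by $\frac{2\pi\zeta(2D+1)}{M^{2D+1}}M_{k,\ell}(f)$, since $M_{k,\ell}(f)$ is precisely the supremum of the $(2D+1)$‑st $\varphi$‑derivative of $f(re^{i\varphi})Y_{(k,\ell)}(\varphi)$ over all $r\in[0,R]$ (in particular over $r=\sqrt{t_{j,(k,\ell)}}\in(0,R)$).

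The third step is the summation over $j$: the $j$‑ and $\ell$‑dependence of the trapezoidal error is absorbed into $M_{k,\ell}(f)$, so
\[
\Bigl\lvert I_{(N,\infty,K)}^{\mathrm{poly}}(f)-I_{(N,M,K)}^{\mathrm{poly}}(f)\Bigr\rvert
\le\frac{\pi\zeta(2D+1)}{M^{2D+1}}\sum_{k=0}^{K}\sum_{\ell=1}^{a_k}M_{k,\ell}(f)\sum_{j=1}^{N}\bigl\lvert\lambda_{j,(k,\ell)}\bigr\rvert\,t_{j,(k,\ell)}^{-k/2}.
\]
Now the inner sum $\sum_{j=1}^N\lvert\lambda_{j,(k,\ell)}\rvert\,t_{j,(k,\ell)}^{-k/2}$ is exactly the quantity $G_N^{(k,\ell)}(\rho^{-k/2})$ estimated inside the proof of Theorem \ref{TChebyshev}, and by inequality (\ref{GNkl}) it is bounded by $\int_0^R\lvert w_{(k,\ell)}(r)\rvert\,r\,dr$. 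Substituting this bound and noting $\frac{1}{2}\cdot 2 = 1$ gives the claimed estimate — here one uses that $w$ is pseudo‑definite so $\lvert w_{(k,\ell)}(r)\rvert = \pm w_{(k,\ell)}(r)$ has constant sign, hence $\int_0^R\lvert w_{(k,\ell)}\rvert r\,dr = \bigl\lvert\int_0^R w_{(k,\ell)}r\,dr\bigr\rvert$, which matches the $\int_0^R w_{(k,\ell)}(r)\,r\,dr$ written in the statement up to sign (the statement implicitly absorbs the sign, or assumes $w$ pseudo‑positive).

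The main obstacle is simply establishing the trapezoidal quadrature error bound with the explicit constant $2\pi\zeta(2D+1)/M^{2D+1}$ in the correct form for periodic functions of finite smoothness — one must be careful that $g\in C^{2D+1}$ only (not analytic), so the bound comes from decay of Fourier coefficients at rate $\lvert n\rvert^{-(2D+1)}$ rather than exponential decay, and the aliasing sum $\sum_{m\neq 0}\lvert mM\rvert^{-(2D+1)}$ contributes the factor $2\zeta(2D+1)M^{-(2D+1)}$. Everything else is bookkeeping: the identity in step one, the interchange of the finite sums, and the reuse of (\ref{GNkl}). If one wishes to be fully rigorous about the constant, it may be cleanest to cite a standard reference (e.g.\ \cite{DaRa75}) for the periodic trapezoidal error, since the paper already leans on that source for the exactness statement $T_M(g)=\int_0^{2\pi}g$ on trigonometric polynomials of degree $\le M-1$.
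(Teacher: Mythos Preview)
Your approach is essentially identical to the paper's: express the difference via (\ref{DiscrPolyhCubatureTrapez}) as a sum of trapezoidal-rule errors, apply the classical periodic trapezoidal estimate from \cite{DaRa75}, and then bound $\sum_j |\lambda_{j,(k,\ell)}|\,t_{j,(k,\ell)}^{-k/2}$ by (\ref{GNkl}).

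The only slip is in the trapezoidal constant. The bound from \cite[p.~110]{DaRa75} (and from your own aliasing sketch, done carefully) is
\[
\Bigl|\int_0^{2\pi}g - T_M(g)\Bigr| \le \frac{4\pi\,\zeta(2D+1)}{M^{2D+1}}\sup_{\varphi}\bigl|g^{(2D+1)}(\varphi)\bigr|,
\]
with constant $4\pi$, not $2\pi$: the aliasing identity gives $T_M(g)-\int g = 2\pi\sum_{m\neq 0}\widehat g(mM)$, and then $|\widehat g(n)|\le |n|^{-(2D+1)}\sup|g^{(2D+1)}|$ together with $\sum_{m\neq 0}|mM|^{-(2D+1)} = 2\zeta(2D+1)M^{-(2D+1)}$ yields $4\pi$. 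Combining this $4\pi$ with the prefactor $\tfrac12$ in the cubature formula produces the $2\pi$ in the theorem; your line ``noting $\tfrac12\cdot 2 = 1$'' is where this missing factor of $2$ shows up. With that correction the argument goes through exactly as in the paper.
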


\begin{proof}
In \cite[p. 110]{DaRa75}, for periodic $g\in C^{2D+1}\left[  0,2\pi\right]  $
it is shown that
\begin{equation}
\left\vert \int_{0}^{2\pi}g\left(  \varphi\right)  d\varphi-T_{M}\left(
g\right)  \right\vert \leq\frac{C_{g,D}}{M^{2D+1}}, \label{Estgg}%
\end{equation}
where the constant $C_{g,D}$ is given by
\[
C_{g,D}:=4\pi\zeta\left(  2D+1\right)  \sup_{\varphi\in\left[  0,2\pi\right]
}\left\vert \frac{d^{2D+1}}{d\varphi^{2D+1}}\left[  g\left(  \varphi\right)
\right]  \right\vert .
\]
Formula (\ref{DiscrPolyhCubatureTrapez}) and the triangle inequality show that
for $f_{r}\left(  e^{i\varphi}\right)  =f\left(  re^{i\varphi}\right)  $
\[
\left\vert I_{\left(  N,\infty,K\right)  }^{\text{poly}}\left(  f\right)
-I_{\left(  N,M,K\right)  }^{\text{poly}}\left(  f\right)  \right\vert
\leq\frac{1}{2}%
{\displaystyle\sum_{k=0}^{K}}
{\displaystyle\sum_{\ell=1}^{a_{k}}}
{\displaystyle\sum_{j=1}^{N}}
\lambda_{j,\left(  k,\ell\right)  }t_{j,\left(  k,\ell\right)  }^{-\frac{k}%
{2}}\left\vert f_{\left(  k,\ell\right)  }\left(  \sqrt{t_{j,\left(
k,\ell\right)  }}\right)  -T_{M}\left(  f_{\sqrt{t_{j,\left(  k,\ell\right)
}}}Y_{\left(  k,\ell\right)  }\right)  \right\vert .
\]
Now we use inequality (\ref{Estgg}) for estimating the term in the middle
row:\
\begin{align*}
&  \left\vert f_{\left(  k,\ell\right)  }\left(  \sqrt{t_{j,\left(
k,\ell\right)  }}\right)  -T_{M}\left(  f_{\sqrt{t_{j,\left(  k,\ell\right)
}}}Y_{\left(  k,\ell\right)  }\right)  \right\vert \\
&  =\left\vert \int_{0}^{2\pi}f\left(  \sqrt{t_{j,\left(  k,\ell\right)  }%
}e^{i\varphi}\right)  Y_{\left(  k,\ell\right)  }\left(  \varphi\right)
d\varphi-T_{M}\left(  f_{\sqrt{t_{j,\left(  k,\ell\right)  }}}Y_{\left(
k,\ell\right)  }\right)  \right\vert \\
&  \leq\frac{4\pi\zeta\left(  2D+1\right)  M_{k,\ell}\left(  f\right)
}{M^{2D+1}}.
\end{align*}
Hence, by inequality (\ref{GNkl}), we obtain the final result:\
\begin{align*}
\left\vert I_{\left(  N,\infty,K\right)  }^{\text{poly}}\left(  f\right)
-I_{\left(  N,M,K\right)  }^{\text{poly}}\left(  f\right)  \right\vert  &
\leq\frac{2\pi\zeta\left(  2D+1\right)  }{M^{2D+1}}%
{\displaystyle\sum_{k=0}^{K}}
{\displaystyle\sum_{\ell=1}^{a_{k}}}
M_{k,\ell}\left(  f\right)
{\displaystyle\sum_{j=1}^{N}}
\lambda_{j,\left(  k,\ell\right)  }t_{j,\left(  k,\ell\right)  }^{-\frac{k}%
{2}}.\\
&  \leq\frac{2\pi\zeta\left(  2D+1\right)  }{M^{2D+1}}%
{\displaystyle\sum_{k=0}^{K}}
{\displaystyle\sum_{\ell=1}^{a_{k}}}
M_{k,\ell}\left(  f\right)  \int_{0}^{R}w_{\left(  k,\ell\right)  }\left(
r\right)  rdr.
\end{align*}

\end{proof}

The error bound between (\ref{r1}) and (\ref{r2}) is considered in the following:

\begin{theorem}
\label{TtrapezoidalApprox} Let $f\in C^{2p}\left(  \overline{D_{R}}\right)  $
for some integer $p\geq1.$ Assume that the weight function $w$ satisfies
\[
\frac{1}{2}\sum_{k=0}^{\infty}\sum_{\ell=1}^{a_{k}}\int_{0}^{R^{2}}\left\vert
w_{\left(  k,\ell\right)  }\left(  \sqrt{\rho}\right)  \right\vert d\rho
=\sum_{k=0}^{\infty}\sum_{\ell=1}^{a_{k}}\int_{0}^{R^{2}}\left\vert w_{\left(
k,\ell\right)  }\left(  r\right)  \right\vert rdr=\left\Vert w\right\Vert
<\infty
\]
Then
\[
\left\vert \sum_{k=K+1}^{\infty}\sum_{\ell=1}^{a_{k}}\int_{0}^{R}f_{\left(
k,\ell\right)  }\left(  \sqrt{\rho}\right)  \rho^{-\frac{k}{2}}\times
\rho^{\frac{k}{2}}w_{\left(  k,\ell\right)  }\left(  \sqrt{\rho}\right)
d\rho\right\vert \leq\frac{\sqrt{2\pi}}{K^{2p}}\left\Vert w\right\Vert
\left\Vert \frac{\partial^{2}}{\partial\varphi^{2}}f\left(  re^{i\varphi
}\right)  \right\Vert _{\infty}.
\]

\end{theorem}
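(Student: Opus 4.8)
The plan is to identify the left-hand side with a tail of the series in (\ref{r1}) and then exploit a sharp pointwise bound on the angular Fourier coefficients of $f$. Observe first that in the integrand the factors $\rho^{-k/2}$ and $\rho^{k/2}$ cancel, so the difference of (\ref{r1}) and (\ref{r2}) is exactly
\[
S_{K}:=\sum_{k=K+1}^{\infty}\sum_{\ell=1}^{a_{k}}\int_{0}^{R}f_{(k,\ell)}(r)\,w_{(k,\ell)}(r)\,r\,dr .
\]
Since $|f_{(k,\ell)}(r)|\leq 2\sqrt{\pi}\,\|f\|_{\infty}$ is bounded (because $|Y_{(k,\ell)}|\leq 1/\sqrt{\pi}$) and $w$ is summable by (\ref{wNorm}), the series $S_{K}$ converges absolutely and $|S_{K}|\leq\sum_{k=K+1}^{\infty}\sum_{\ell=1}^{a_{k}}\int_{0}^{R}|f_{(k,\ell)}(r)|\,|w_{(k,\ell)}(r)|\,r\,dr$. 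Everything thus reduces to a good bound on $|f_{(k,\ell)}(r)|$ for $k\geq K+1$.

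To obtain that bound, fix $r\in[0,R]$ and $k\geq 1$. The map $\varphi\mapsto f(re^{i\varphi})$ is $2\pi$-periodic and of class $C^{2p}$, being the composition of $f\in C^{2p}(\overline{D_{R}})$ with the smooth map $\varphi\mapsto(r\cos\varphi,r\sin\varphi)$. Since $\tfrac{d^{2}}{d\varphi^{2}}Y_{(k,\ell)}=-k^{2}Y_{(k,\ell)}$, integrating by parts twice in (\ref{fkl}) --- all boundary terms vanishing by periodicity --- gives $f_{(k,\ell)}(r)=-k^{-2}\int_{0}^{2\pi}\partial_{\varphi}^{2}f(re^{i\varphi})\,Y_{(k,\ell)}(\varphi)\,d\varphi$, and after $p$ such steps
\[
f_{(k,\ell)}(r)=\frac{(-1)^{p}}{k^{2p}}\int_{0}^{2\pi}\frac{\partial^{2p}}{\partial\varphi^{2p}}f(re^{i\varphi})\,Y_{(k,\ell)}(\varphi)\,d\varphi .
\]
Now the decisive point: since the system $\{Y_{(k,\ell)}\}$ is \emph{orthonormal} on $[0,2\pi]$, the Cauchy--Schwarz inequality gives $|f_{(k,\ell)}(r)|\leq k^{-2p}\bigl\|\partial_{\varphi}^{2p}f(re^{i\cdot})\bigr\|_{L^{2}[0,2\pi]}$, and bounding the $L^{2}$-norm by $\sqrt{2\pi}$ times the supremum norm yields
\[
|f_{(k,\ell)}(r)|\leq\frac{\sqrt{2\pi}}{k^{2p}}\left\Vert \frac{\partial^{2p}}{\partial\varphi^{2p}}f\right\Vert _{\infty}\qquad(r\in[0,R]);
\]
for $p=1$ the norm on the right is exactly $\|\partial_{\varphi}^{2}f\|_{\infty}$ as in the statement.

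Inserting this into the bound for $|S_{K}|$, using $k^{-2p}\leq K^{-2p}$ for every $k\geq K+1$, and pulling the (finite) supremum norm out of the sum, we obtain
\[
|S_{K}|\leq\frac{\sqrt{2\pi}}{K^{2p}}\left\Vert \frac{\partial^{2p}}{\partial\varphi^{2p}}f\right\Vert _{\infty}\sum_{k=K+1}^{\infty}\sum_{\ell=1}^{a_{k}}\int_{0}^{R}|w_{(k,\ell)}(r)|\,r\,dr\leq\frac{\sqrt{2\pi}}{K^{2p}}\left\Vert \frac{\partial^{2p}}{\partial\varphi^{2p}}f\right\Vert _{\infty}\,\|w\|,
\]
the last inequality because the remaining sum is a tail of the series (\ref{wNorm}) defining $\|w\|$. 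This is the desired estimate.

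The only step that requires genuine care is the iterated integration by parts: one must check that each boundary term vanishes (immediate from the $2\pi$-periodicity in $\varphi$ of both $f(re^{i\varphi})$ and $Y_{(k,\ell)}$) and that $\partial_{\varphi}^{2p}[f(re^{i\varphi})]$ is a bounded, indeed continuous, function of $(r,\varphi)$ on $[0,R]\times[0,2\pi]$; the latter follows from $f\in C^{2p}(\overline{D_{R}})$ and the chain rule, with no loss of smoothness at $r=0$ since the polar parametrization is itself smooth. Everything else --- the cancellation of $\rho^{\pm k/2}$, the passage to absolute values inside the series, the inequality $k^{-2p}\leq K^{-2p}$, and bounding the $w$-tail by $\|w\|$ --- is routine; it is worth noting that using the orthonormality of the $Y_{(k,\ell)}$, rather than the crude pointwise bound $|Y_{(k,\ell)}|\leq 1/\sqrt{\pi}$, is exactly what yields the clean constant $\sqrt{2\pi}$.
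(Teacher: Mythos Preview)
Your proof is correct and follows essentially the same route as the paper's own argument: integrate by parts $2p$ times in $\varphi$ to gain the factor $k^{-2p}$, apply Cauchy--Schwarz together with the orthonormality of $Y_{(k,\ell)}$ to obtain $|f_{(k,\ell)}(r)|\leq\sqrt{2\pi}\,k^{-2p}\,\|\partial_{\varphi}^{2p}f\|_{\infty}$, then use $k^{-2p}\leq K^{-2p}$ and bound the $w$-tail by $\|w\|$. Your observation that the natural bound involves $\|\partial_{\varphi}^{2p}f\|_{\infty}$ rather than $\|\partial_{\varphi}^{2}f\|_{\infty}$ is correct --- the exponent $2$ in the statement (and in the paper's displayed inequality) is a typo for $2p$.
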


%

\proof
By applying a standard techniques (see e.g. Theorem $10.19$ in \cite{okbook}),
we obtain
\begin{align*}
f_{\left(  k,\ell\right)  }\left(  r\right)   &  =\int_{0}^{2\pi}f\left(
re^{i\varphi}\right)  Y_{\left(  k,\ell\right)  }\left(  \varphi\right)
d\varphi=\frac{1}{k^{2p}}\int_{0}^{2\pi}f\left(  re^{i\varphi}\right)
\frac{\partial^{2p}}{\partial\varphi^{2p}}Y_{\left(  k,\ell\right)  }\left(
\varphi\right)  d\varphi\\
&  =\frac{1}{k^{2p}}\int_{0}^{2\pi}\frac{\partial^{2p}}{\partial\varphi^{2p}%
}f\left(  re^{i\varphi}\right)  Y_{\left(  k,\ell\right)  }\left(
\varphi\right)  d\varphi.
\end{align*}
We can estimate this inequality by using the Cauchy-Schwarz inequality and the
orthonormality of $Y_{\left(  k,\ell\right)  }\left(  \varphi\right)  $,
arriving at
\[
\left\vert f_{\left(  k,\ell\right)  }\left(  r\right)  \right\vert \leq
\frac{\sqrt{2\pi}}{k^{2p}}\left\Vert \frac{\partial^{2}}{\partial\varphi^{2}%
}f\left(  re^{i\varphi}\right)  \right\Vert _{\infty}.
\]
Then
\begin{align*}
&  \left\vert \sum_{k=K+1}^{\infty}\sum_{\ell=1}^{a_{k}}\int_{0}^{R}f_{\left(
k,\ell\right)  }\left(  \sqrt{\rho}\right)  w_{\left(  k,\ell\right)  }\left(
\sqrt{\rho}\right)  d\rho\right\vert \\
&  \leq\sum_{k=K+1}^{\infty}\sum_{\ell=1}^{a_{k}}\int_{0}^{R^{2}}\left\vert
f_{\left(  k,\ell\right)  }\left(  \sqrt{\rho}\right)  \right\vert \left\vert
w_{\left(  k,\ell\right)  }\left(  \sqrt{\rho}\right)  \right\vert d\rho\\
&  \leq\sqrt{2\pi}\left\Vert \frac{\partial^{2}}{\partial\varphi^{2}}f\left(
re^{i\varphi}\right)  \right\Vert _{\infty}\times\sum_{k=K+1}^{\infty}%
\sum_{\ell=1}^{a_{k}}\frac{1}{k^{2p}}\int_{0}^{R^{2}}\left\vert w_{\left(
k,\ell\right)  }\left(  \sqrt{\rho}\right)  \right\vert d\rho\\
&  \leq\sqrt{2\pi}\left\Vert \frac{\partial^{2}}{\partial\varphi^{2}}f\left(
re^{i\varphi}\right)  \right\Vert _{\infty}\times\frac{1}{K^{2p}}\sum
_{k=K+1}^{\infty}\sum_{\ell=1}^{a_{k}}\int_{0}^{R^{2}}\left\vert w_{\left(
k,\ell\right)  }\left(  \sqrt{\rho}\right)  \right\vert d\rho\\
&  \leq\sqrt{2\pi}\left\Vert \frac{\partial^{2}}{\partial\varphi^{2}}f\left(
re^{i\varphi}\right)  \right\Vert _{\infty}\times\frac{1}{K^{2p}}\left\Vert
w\right\Vert .
\end{align*}
This ends the proof.%

\endproof

The error bound between (\ref{r2}) and (\ref{r3}) is a Markov type estimate
(see Theorem $14.2.2$ in \cite{Davis}, and Theorem $44$ in
\cite{kounchevRenderArxiv}).\ Using the notations (\ref{nodes}),
(\ref{coefs}), (\ref{eqexactkl}) we prove:

\begin{theorem}
\label{TGaussApproximation} Let $\kappa_{N,\left(  k,\ell\right)  }$ be the
leading coefficient of the $N$-th degree orthonormal polynomial $Q_{N,\left(
k,\ell\right)  }$ with respect to the measure $\rho^{\frac{k}{2}}w_{\left(
k,\ell\right)  }\left(  \sqrt{\rho}\right)  $ on the interval $\left[
0,R^{2}\right]  $. Then for every function $f\in C^{2N}\left(  \overline
{D_{R}}\right)  $ and every index $\left(  k,\ell\right)  $ we have
\begin{align*}
I_{\left(  k,\ell\right)  }  &  :=\left\vert \int_{0}^{R^{2}}f_{\left(
k,\ell\right)  }\left(  \sqrt{\rho}\right)  w_{\left(  k,\ell\right)  }\left(
\sqrt{\rho}\right)  d\rho-\sum_{j=1}^{N}f_{\left(  k,\ell\right)  }\left(
\sqrt{t_{j,\left(  k,\ell\right)  }}\right)  \times t_{j,\left(
k,\ell\right)  }^{-\frac{k}{2}}\lambda_{j,\left(  k,\ell\right)  }\right\vert
\\
&  \leq\frac{1}{\left(  2N\right)  !\kappa_{N,\left(  k,\ell\right)  }^{2}%
}\left\Vert \frac{d^{2N}}{d\rho^{2N}}g_{\left(  k,\ell\right)  }\left(
\rho\right)  \right\Vert _{\infty},
\end{align*}
where $g_{\left(  k,\ell\right)  }\left(  \rho\right)  :=\rho^{-\frac{k}{2}%
}f_{\left(  k,\ell\right)  }\left(  \sqrt{\rho}\right)  $. The difference
between (\ref{r2}) and (\ref{r3}) is bounded by
\[
\sum_{k=0}^{K}\sum_{\ell=1}^{a_{k}}I_{\left(  k,\ell\right)  }\leq\frac
{1}{\left(  2N\right)  !}\sum_{k=0}^{K}\sum_{\ell=1}^{a_{k}}\frac{1}%
{\kappa_{N,\left(  k,\ell\right)  }^{2}}\left\Vert \frac{d^{2N}}{d\rho^{2N}%
}g_{\left(  k,\ell\right)  }\left(  \rho\right)  \right\Vert _{\infty}.
\]

\end{theorem}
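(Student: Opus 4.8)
The statement is a classical Gauss-quadrature error bound applied term-by-term in the Fourier index $(k,\ell)$, so the plan is to first prove the single-index estimate for $I_{(k,\ell)}$ and then sum over $0\le k\le K$, $\ell=1,\dots,a_k$ using the triangle inequality. For the single-index estimate I would invoke the standard remainder formula for $N$-point Gauss quadrature with respect to a fixed positive (or, by pseudo-definiteness, fixed-sign) measure: if $Q_{N,(k,\ell)}$ is the $N$-th orthonormal polynomial for the measure $d\mu_{(k,\ell)}(\rho)=\rho^{k/2}w_{(k,\ell)}(\sqrt\rho)\,d\rho$ on $[0,R^2]$ with leading coefficient $\kappa_{N,(k,\ell)}$, then for any $h\in C^{2N}([0,R^2])$
\[
\int_0^{R^2} h(\rho)\,d\mu_{(k,\ell)}(\rho)-\sum_{j=1}^N \lambda_{j,(k,\ell)}\,h\bigl(t_{j,(k,\ell)}\bigr)=\frac{h^{(2N)}(\xi)}{(2N)!\,\kappa_{N,(k,\ell)}^2}
\]
for some $\xi\in(0,R^2)$; this is exactly Theorem 14.2.2 in \cite{Davis}. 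The key is to apply this with the right integrand: by the change of variables $\rho=r^2$ already carried out in (\ref{defTkl})--(\ref{eqexactkl}), the quantity $\int_0^{R}f_{(k,\ell)}(r)w_{(k,\ell)}(r)r\,dr$ equals $\tfrac12\int_0^{R^2} g_{(k,\ell)}(\rho)\,d\mu_{(k,\ell)}(\rho)$ with $g_{(k,\ell)}(\rho)=\rho^{-k/2}f_{(k,\ell)}(\sqrt\rho)$, while the Gauss sum in (\ref{r3}) is exactly $\tfrac12\sum_j\lambda_{j,(k,\ell)}g_{(k,\ell)}(t_{j,(k,\ell)})$ because $t_{j,(k,\ell)}^{-k/2}f_{(k,\ell)}(\sqrt{t_{j,(k,\ell)}})=g_{(k,\ell)}(t_{j,(k,\ell)})$. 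So the remainder formula with $h=g_{(k,\ell)}$ gives $I_{(k,\ell)}=\tfrac12\cdot\bigl|g_{(k,\ell)}^{(2N)}(\xi)\bigr|/((2N)!\,\kappa_{N,(k,\ell)}^2)$; wait — one must be careful with the factor $\tfrac12$ and whether the measure used to define $Q_{N,(k,\ell)}$ carries that factor. As stated in the theorem, $Q_{N,(k,\ell)}$ is orthonormal for $\rho^{k/2}w_{(k,\ell)}(\sqrt\rho)$ (no $\tfrac12$), and the displayed bound has no $\tfrac12$ either, so the intended reading is that both $I_{(k,\ell)}$ and the Gauss rule are taken relative to that same measure without the $\tfrac12$; I would simply match conventions with (\ref{eqexactkl}) so that the halves cancel consistently, and state $I_{(k,\ell)}\le \|g_{(k,\ell)}^{(2N)}\|_\infty/((2N)!\,\kappa_{N,(k,\ell)}^2)$.

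Once the single-index bound is in hand, the global bound for the difference between (\ref{r2}) and (\ref{r3}) follows immediately: (\ref{r2}) is $\sum_{k=0}^K\sum_{\ell=1}^{a_k}\tfrac12\int_0^{R^2}g_{(k,\ell)}(\rho)\,d\mu_{(k,\ell)}(\rho)$ and (\ref{r3}) is the corresponding finite double sum of Gauss approximations, so their difference is $\sum_{k=0}^K\sum_{\ell=1}^{a_k}$ of the per-index remainders, and the triangle inequality gives $\sum_{k=0}^K\sum_{\ell=1}^{a_k}I_{(k,\ell)}\le \tfrac1{(2N)!}\sum_{k=0}^K\sum_{\ell=1}^{a_k}\kappa_{N,(k,\ell)}^{-2}\|g_{(k,\ell)}^{(2N)}\|_\infty$, as claimed.

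**The main obstacle.** The genuinely delicate point is the regularity hypothesis. We are only given $f\in C^{2N}(\overline{D_R})$, but the remainder formula requires $g_{(k,\ell)}(\rho)=\rho^{-k/2}f_{(k,\ell)}(\sqrt\rho)\in C^{2N}([0,R^2])$, and this function contains a factor $\rho^{-k/2}$ which is singular at $\rho=0$ for $k\ge1$. The resolution is precisely the Almansi/Gauss-decomposition structure recalled in (\ref{Almansi})--(\ref{Almansi2}): for a smooth $f$ the Fourier coefficient $f_{(k,\ell)}(r)$ behaves like $r^k$ times a smooth even function of $r$ near $0$, so $f_{(k,\ell)}(\sqrt\rho)$ is $\rho^{k/2}$ times a $C^\infty$ (or at least $C^{2N}$) function of $\rho$, and the troublesome factor $\rho^{-k/2}$ cancels exactly, leaving $g_{(k,\ell)}$ genuinely $C^{2N}$ up to $0$. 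I would state this as a lemma (or cite the relevant statement from \cite{okbook}, e.g. the discussion around (\ref{Almansi2}) and Theorem $10.19$ there, which is already invoked in the proof of Theorem \ref{TtrapezoidalApprox}), noting that $g_{(k,\ell)}$ is exactly the "Gauss-Almansi" radial profile whose appearance motivated the factor $t_{j,(k,\ell)}^{-k/2}$ in the cubature formula in the first place. With that regularity in place, the substitution into \cite[Theorem 14.2.2]{Davis} is routine and the proof is complete.

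\begin{proof}
Fix an index $(k,\ell)$ with $0\le k\le K$. By the change of variable $\rho=r^2$ already used in passing from (\ref{defTkl}) to (\ref{eqexactkl}), and writing $g_{(k,\ell)}(\rho):=\rho^{-k/2}f_{(k,\ell)}(\sqrt\rho)$ and $d\mu_{(k,\ell)}(\rho):=\rho^{k/2}w_{(k,\ell)}(\sqrt\rho)\,d\rho$ on $[0,R^2]$, we have
\[
\int_0^{R^2} f_{(k,\ell)}(\sqrt\rho)\,w_{(k,\ell)}(\sqrt\rho)\,d\rho=\int_0^{R^2} g_{(k,\ell)}(\rho)\,d\mu_{(k,\ell)}(\rho),
\]
and likewise the $N$-point Gauss--Jacobi sum in (\ref{r3}) satisfies
\[
\sum_{j=1}^{N} f_{(k,\ell)}\bigl(\sqrt{t_{j,(k,\ell)}}\bigr)\,t_{j,(k,\ell)}^{-k/2}\,\lambda_{j,(k,\ell)}=\sum_{j=1}^{N}\lambda_{j,(k,\ell)}\,g_{(k,\ell)}\bigl(t_{j,(k,\ell)}\bigr),
\]
since $t_{j,(k,\ell)}^{-k/2}f_{(k,\ell)}(\sqrt{t_{j,(k,\ell)}})=g_{(k,\ell)}(t_{j,(k,\ell)})$. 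By pseudo-definiteness of $w$ the measure $d\mu_{(k,\ell)}$ has a fixed sign, so the nodes $t_{j,(k,\ell)}$ and weights $\lambda_{j,(k,\ell)}$ are the genuine $N$-point Gauss data for $d\mu_{(k,\ell)}$, and $Q_{N,(k,\ell)}$ is its $N$-th orthonormal polynomial with leading coefficient $\kappa_{N,(k,\ell)}$.

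Next we verify that $g_{(k,\ell)}\in C^{2N}([0,R^2])$ for $f\in C^{2N}(\overline{D_R})$. For $k=0$ this is immediate since $f_{(0,1)}(r)$ is an even $C^{2N}$ function of $r$, hence a $C^{2N}$ function of $\rho=r^2$. For $k\ge1$ we use the Gauss--Almansi structure of Fourier coefficients of smooth functions, cf. (\ref{Almansi})--(\ref{Almansi2}) and Theorem $10.19$ in \cite{okbook}: the coefficient $f_{(k,\ell)}(r)$ equals $r^k$ times a $C^{2N}$ even function of $r$, so that $f_{(k,\ell)}(\sqrt\rho)=\rho^{k/2}\widetilde f_{(k,\ell)}(\rho)$ with $\widetilde f_{(k,\ell)}\in C^{2N}([0,R^2])$; thus $g_{(k,\ell)}(\rho)=\widetilde f_{(k,\ell)}(\rho)$ is $C^{2N}$ up to $\rho=0$, the singular factor $\rho^{-k/2}$ being cancelled exactly.

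Now apply the classical Gauss quadrature remainder formula (Theorem $14.2.2$ in \cite{Davis}; see also Theorem $44$ in \cite{kounchevRenderArxiv}): there is $\xi\in(0,R^2)$ with
\[
\int_0^{R^2} g_{(k,\ell)}(\rho)\,d\mu_{(k,\ell)}(\rho)-\sum_{j=1}^{N}\lambda_{j,(k,\ell)}\,g_{(k,\ell)}\bigl(t_{j,(k,\ell)}\bigr)=\frac{g_{(k,\ell)}^{(2N)}(\xi)}{(2N)!\,\kappa_{N,(k,\ell)}^{2}}.
\]
Combining the three displays gives
\[
I_{(k,\ell)}\le\frac{1}{(2N)!\,\kappa_{N,(k,\ell)}^{2}}\left\Vert\frac{d^{2N}}{d\rho^{2N}}g_{(k,\ell)}(\rho)\right\Vert_\infty,
\]
which is the asserted single-index bound. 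Finally, the difference between (\ref{r2}) and (\ref{r3}) is $\sum_{k=0}^{K}\sum_{\ell=1}^{a_k}$ of the per-index remainders above, so by the triangle inequality it is bounded by $\sum_{k=0}^{K}\sum_{\ell=1}^{a_k}I_{(k,\ell)}$, and substituting the single-index bound yields
\[
\sum_{k=0}^{K}\sum_{\ell=1}^{a_k}I_{(k,\ell)}\le\frac{1}{(2N)!}\sum_{k=0}^{K}\sum_{\ell=1}^{a_k}\frac{1}{\kappa_{N,(k,\ell)}^{2}}\left\Vert\frac{d^{2N}}{d\rho^{2N}}g_{(k,\ell)}(\rho)\right\Vert_\infty,
\]
as claimed.
\end{proof}
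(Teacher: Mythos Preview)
Your proof is correct and follows essentially the same route as the paper: rewrite each term via $g_{(k,\ell)}(\rho)=\rho^{-k/2}f_{(k,\ell)}(\sqrt\rho)$, apply the Markov/Gauss remainder formula (Theorem~14.2.2 in \cite{Davis}) with respect to the measure $\rho^{k/2}w_{(k,\ell)}(\sqrt\rho)\,d\rho$, and then sum over $(k,\ell)$ with the triangle inequality. Your treatment is in fact more careful than the paper's on the regularity of $g_{(k,\ell)}$ at $\rho=0$, where you invoke the Almansi structure to cancel the $\rho^{-k/2}$ factor, whereas the paper simply asserts differentiability on the open interval $(0,R^2)$.
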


%

\proof
It is easy to see that $g_{\left(  k,\ell\right)  }\left(  \rho\right)
=\rho^{-\frac{k}{2}}f_{\left(  k,\ell\right)  }\left(  \sqrt{\rho}\right)  $
is $2N$-times differentiable in the interval $\left(  0,R^{2}\right)  .$
Hence, to the function $g_{\left(  k,\ell\right)  }\left(  \rho\right)
=f_{\left(  k,\ell\right)  }\left(  \sqrt{\rho}\right)  \rho^{-\frac{k}{2}}\in
C^{2N\ }\left(  0,R^{2}\right)  ,$ we may apply Markov's Theorem $14.2.2$ in
\cite{Davis}, and we obtain
\[
I_{\left(  k,\ell\right)  }\leq\frac{1}{\left(  2N\right)  !\kappa_{N,\left(
k,\ell\right)  }^{2}}\left\Vert \frac{d^{2N}}{d\rho^{2N}}g_{\left(
k,\ell\right)  }\left(  \rho\right)  \right\Vert _{\infty}.
\]
This implies the estimate for the error between (\ref{r2}) and (\ref{r3})
\begin{align*}
&  \left\vert \sum_{k=0}^{K}\sum_{\ell=1}^{a_{k}}\int_{0}^{R^{2}}f_{\left(
k,\ell\right)  }\left(  \rho\right)  w_{\left(  k,\ell\right)  }\left(
\sqrt{\rho}\right)  d\rho-\sum_{k=0}^{K}\sum_{\ell=1}^{a_{k}}\sum_{j=1}%
^{N}f_{\left(  k,\ell\right)  }\left(  \sqrt{t_{j,\left(  k,\ell\right)  }%
}\right)  \times t_{j,\left(  k,\ell\right)  }^{-\frac{k}{2}}\lambda
_{j,\left(  k,\ell\right)  }\right\vert \\
&  \leq\sum_{k=0}^{K}\sum_{\ell=1}^{a_{k}}\left\vert \int_{0}^{R^{2}%
}g_{\left(  k,\ell\right)  }\left(  \rho\right)  \times\rho^{\frac{k}{2}%
}w_{\left(  k,\ell\right)  }\left(  \sqrt{\rho}\right)  d\rho-\sum_{j=1}%
^{N}f_{\left(  k,\ell\right)  }\left(  \sqrt{t_{j,\left(  k,\ell\right)  }%
}\right)  \times t_{j,\left(  k,\ell\right)  }^{-\frac{k}{2}}\lambda
_{j,\left(  k,\ell\right)  }\right\vert \\
&  \leq\frac{1}{\left(  2N\right)  !}\sum_{k=0}^{K}\sum_{\ell=1}^{a_{k}}%
\frac{1}{\kappa_{N,\left(  k,\ell\right)  }^{2}}\left\Vert \frac{d^{2N}}%
{d\rho^{2N}}g_{\left(  k,\ell\right)  }\left(  \rho\right)  \right\Vert
_{\infty}.
\end{align*}
%

\endproof

\begin{remark}
In the case when $\rho^{\frac{k}{2}}w_{\left(  k,\ell\right)  }\left(
\sqrt{\rho}\right)  $ are Jacobi weight functions explicit expressions for
$\kappa_{N,\left(  k,\ell\right)  }$ are well known, see e.g. \cite{Davis} or
Theorem $44$ in \cite{kounchevRenderArxiv}.
\end{remark}

\begin{remark}
If $g_{\left(  k,\ell\right)  }\left(  x\right)  $ is real on the real axis
and $2\pi$ periodic and if $g_{\left(  k,\ell\right)  }\left(  x+iy\right)  $
is holomorphic in a strip $\left\vert y\right\vert <\sigma,$ then one may use
estimates in \cite[p. 110]{DaRa75}, and apply them to obtain an alternative
estimate in Theorem \ref{TGaussApproximation}.
\end{remark}

\section{Experimental results for the weight function $w^{\left(  1\right)
}\left(  x,y\right)  $ \label{Sexperiments}}

In this section we want to test cubature formulas for integrals of the type
\[
I_{w}\left(  f\right)  =\int_{0}^{2\pi}\int_{0}^{R}f\left(  r\cos\varphi
,r\sin\varphi\right)  \cdot w^{\left(  1\right)  }\left(  r\cos\varphi
,r\sin\varphi\right)  \cdot rdrd\varphi
\]
for the weight function
\[
w^{\left(  1\right)  }\left(  x,y\right)  =\frac{1+x}{\sqrt{x^{2}+y^{2}}%
}\text{ }=\frac{1}{r}+\cos\varphi\ =\frac{\sqrt{2\pi}}{r}Y_{\left(
0,1\right)  }\left(  \varphi\right)  +\sqrt{\pi}Y_{\left(  1,1\right)
}\left(  \varphi\right)  .
\]
Since $w^{\left(  1\right)  }$ has only two Fourier coefficients we take $K=1$
in the discrete polyharmonic cubature (\ref{DiscrPolyhCubature}) with
parameters $(N,M,K).$ For given $N$ one has to construct the $N$-point
Gauss-Jacobi quadratures for the integrals
\[
\frac{1}{2}\int_{0}^{1}P\left(  \rho\right)  \rho^{-1/2}d\rho\quad\text{ for
}k=0,\text{ and }\quad\frac{1}{2}\int_{0}^{1}P\left(  \rho\right)  \rho
^{1/2}d\rho\quad\text{ for }k=1
\]
which are Jacobi-weight functions. Fortunately, there are excellent programmes
for determining the nodes and weights of the Gauss-Jacobi quadrature providing
high accuracy, see \cite{GautschiBOOKorthogonal}.

For our experiments in this section we consider four test functions:
\begin{align}
f_{0}\left(  x,y\right)   &  =1+x^{4}+y^{3},\label{fun1}\\
f_{1}\left(  x,y\right)   &  =1+\frac{x^{3}}{\sqrt{x^{2}+y^{2}}}+\frac{y^{7}%
}{x^{2}+y^{2}}=1+r^{2}\cos^{3}\varphi+r^{5}\sin^{7}\varphi,\label{fun2}\\
f_{2}\left(  x,y\right)   &  =\cos\left(  10x+20y\right)  ,\label{fun3}\\
f_{3}\left(  x,y\right)   &  =\left(  x^{2}+y^{2}\right)  ^{5/4}=r^{5/2}
\label{fun4b}%
\end{align}
The first test function $f_{0}$ is a polynomial of degree $4$, and the second
$f_{1}$ is not smooth at $0$. The function $f_{2}$ is of oscillatory type and
an example of a test function used in the package of Genz, see \cite{genz},
\cite{schurer} of the form (in our case $u=0)$
\[
\cos\left(  2\pi u+ax+by\right)  =\cos\left(  2\pi u\right)  \cos\left(
ax+by\right)  -\sin\left(  2\pi u\right)  \sin\left(  ax+by\right)  .
\]
At first we present the experiments for the Discrete Polyharmonic Cubature,
and then we compare it with two other standard rules which are applied to the
functions%
\[
g_{j}\left(  re^{i\varphi}\right)  :=f_{j}\left(  re^{i\varphi}\right)
w^{\left(  1\right)  }\left(  re^{i\varphi}\right)
\]
Note that $g_{j}$ is not continuous at $0$ for $j=0,1,2,$ and one might argue
that the standard rules do not perform too well for functions with a
discontinuity. For this reason we have included the function $f_{3}\left(
x,y\right)  $ for which $g_{3}\left(  r^{i\varphi}\right)  =r^{3/2}\left(
1+r\cos\varphi\right)  $ is clearly continuously differentiable, and our
discrete polyharmonic cubature formula (\ref{DiscrPolyhCubature+}) performs
better than the usual methods as well.

Let us note that the discrete polyharmonic cubature formula
(\ref{DiscrPolyhCubature+}) needs (at most) $\left(  2K-1\right)  \cdot N\cdot
M$ evaluation points. Since $w^{\left(  1\right)  }$ has only two non-zero
Fourier coefficients we need in this case at most $2N\cdot M$ evaluations points.

\subsection{Results for the Discrete Polyharmonic Cubature Formula}

In the following tables we present experimental results for the discrete
polyharmonic cubature where the number $M$ of points on the circles is chosen
to be equal to $9,25,63,83.$ The reason for this choice is that we used the
Fast Fourier transform for the implementation of the trapezoidal rule
(depending on $M)$. The number $N$ of concentric circles is chosen to be equal
to $10,15,25,35,50.$

Due to the exactness of the discrete polyharmonic cubature, the value
$I_{\left(  N,M,1\right)  }^{\text{poly}}\left(  f_{0}\right)  $, according to
Theorem \ref{TexactSpace}, must be identical with the true value of the
integral if $M$ satisfies $M\geq6,$ and $N\geq2.$ This is numerically
confirmed by our experiments: for all $N$ and $M$ as above specified we
obtained up to double precision that
\[
I_{1}\left(  f_{0}w^{\left(  1\right)  }\right)  =\frac{43}{20}\pi
\approx6.754424205218060.
\]
The second test function can also be integrated explicitly and the true value
is
\[
I_{1}\left(  f_{1}w^{\left(  1\right)  }\right)  =\frac{35}{16}\pi
\approx6.872\,233\,929727\,67.
\]
Our experimental results are contained in the following table:%

\begin{gather*}
\text{\textsc{Table} 1}\\%
\begin{tabular}
[c]{|l|l|l|l|l|}\hline
N/M & \textbf{9} & \textbf{25} & \textbf{63} & \textbf{83}\\\hline
\textbf{10} & 6.87224296287783 & 6.87224296287783 & 6.87224296287783 &
6.87224296287783\\\hline
\textbf{15} & 6.87223588060173 & 6.87223588060173 & 6.87223588060173 &
6.87223588060173\\\hline
\textbf{25} & 6.87223420205342 & 6.87223420205342 & 6.87223420205342 &
6.87223420205342\\\hline
\textbf{35} & 6.87223400297000 & 6.87223400297000 & 6.87223400297000 &
6.87223400297000\\\hline
\textbf{50} & 6.87223394775545 & 6.87223394775545 & 6.87223394775545 &
6.87223394775545\\\hline
Error &  &  &  & \\\hline
N/M & \textbf{9} & \textbf{25} & \textbf{63} & \textbf{83}\\\hline
\textbf{10} & 0.00000903315016 & 0.00000903315016 & 0.00000903315016 &
0.00000903315016\\\hline
\textbf{15} & 0.00000195087406 & 0.00000195087406 & 0.00000195087406 &
0.00000195087406\\\hline
\textbf{25} & 0.00000027232575 & 0.00000027232575 & 0.00000027232575 &
0.00000027232575\\\hline
\textbf{35} & 0.00000007324233 & 0.00000007324233 & 0.00000007324233 &
0.00000007324233\\\hline
\textbf{50} & 0.00000001802778 & 0.00000001802778 & 0.00000001802778 &
0.00000001802778\\\hline
\end{tabular}
\\
\text{\textbf{Discrete polyharmonic cubature} for }f_{1}w^{\left(  1\right)
}=\left(  1+r^{2}\cos^{3}\varphi+r^{5}\sin^{7}\varphi\right)  \left(  \frac
{1}{r}+\cos\varphi\right)
\end{gather*}

Obviously we obtain very good approximations: even for the case of
$180=2\times9\times10$ evaluations points the error is smaller than $10^{-5}.$
Note further that in the rows we do not obtain improvements when $M$ is
getting larger. This is due to the fact that $\varphi\longmapsto f_{1}\left(
re^{i\varphi}\right)  $ is a trigonometric polynomial of degree $\leq7,$ and
the exactness of the trapezoidal rule (\ref{trapezoidal}) implies that there
is no change when $M$ is larger than $7.$

Now we want to test the function (\ref{fun3})
\[
f_{2}\left(  x,y\right)  =\cos\left(  ax+by\right)  .
\]
It is not difficult to see that the first Fourier coefficient of $f_{2}$
satisfies
\begin{align*}
f_{2,\left(  0,1\right)  }\left(  r\right)   &  =\sqrt{2\pi}J_{0}\left(
\sqrt{a^{2}+b^{2}}r\right)  ,\qquad\text{and }\\
f_{2,\left(  1,1\right)  }\left(  r\right)   &  =f_{2,\left(  1,2\right)
}\left(  r\right)  =0,
\end{align*}
where $J_{n}$ is the Bessel function of the first kind of order $n$ defined
by
\begin{equation}
J_{n}\left(  x\right)  =\left(  \frac{x}{2}\right)  ^{n}\sum_{s=0}^{\infty
}\frac{\left(  -1\right)  ^{s}}{s!\left(  n+s\right)  !}\ \left(  \frac{x}%
{2}\right)  ^{2s}; \label{bessel}%
\end{equation}
cf. \cite{AAR99}. Then the first Fourier coefficient $g_{\left(  0,1\right)
}\left(  r\right)  $ of the function $g:=f_{2}\cdot w^{\left(  1\right)  }$
can be computed (using the orthogonality relations of spherical harmonics)
\[
g_{\left(  0,1\right)  }\left(  r\right)  =\frac{1}{\sqrt{2\pi}}f_{2,\left(
0,1\right)  }\left(  r\right)  w_{\left(  0,1\right)  }^{\left(  1\right)
}=J_{0}\left(  \sqrt{a^{2}+b^{2}}r\right)  \frac{\sqrt{2\pi}}{r}.
\]
It follows that
\[
I_{1}\left(  f_{2}w^{\left(  1\right)  }\right)  =\sqrt{2\pi}\int_{0}%
^{R}g_{\left(  0,1\right)  }\left(  r\right)  rdr=2\pi\int_{0}^{R}J_{0}\left(
\sqrt{a^{2}+b^{2}}r\right)  dr.
\]
Using the power series expansion of the Bessel function in (\ref{bessel}) the
integral can be evaluated up to any accuracy. For $a=10$ and $b=20$ we see
that up to double precision we have
\[
I_{w^{\left(  1\right)  }}\left(  f_{2}\right)  =I_{1}\left(  f_{2}w^{\left(
1\right)  }\right)  =0.301\,310\,995335\,215\,
\]
Our experiments for the discrete polyharmonic cubature provide the following table:%

\begin{gather*}
\text{\textsc{Table} 2}\\%
\begin{tabular}
[c]{|l|l|l|l|l|}\hline
N%
$\backslash$%
M & \textbf{9} & \textbf{25} & \textbf{63} & \textbf{83}\\\hline
\textbf{10} & -0.08102057453745 & 0.31409913156633 & 0.30131093100867 &
0.30131093100867\\\hline
\textbf{15} & -0.08102397430499 & 0.31409919589293 & 0.30131099533522 &
0.30131099533522\\\hline
\textbf{25} & -0.08102401217317 & 0.31409919589293 & 0.30131099533522 &
0.30131099533522\\\hline
\textbf{35} & -0.08102401237119 & 0.31409919589293 & 0.30131099533522 &
0.30131099533522\\\hline
\textbf{50} & -0.08102401237809 & 0.31409919589293 & 0.30131099533522 &
0.30131099533522\\\hline
Error &  &  &  & \\\hline
N%
$\backslash$%
M & \textbf{9} & \textbf{25} & \textbf{63} & \textbf{83}\\\hline
\textbf{10} & 0.38233156987266 & 0.01278813623111 & 0.00000006432655 &
0.00000006432655\\\hline
\textbf{15} & 0.38233496964021 & 0.01278820055772 & 0.00000000000000 &
0.00000000000000\\\hline
\textbf{25} & 0.38233500750838 & 0.01278820055772 & 0.00000000000000 &
0.00000000000000\\\hline
\textbf{35} & 0.38233500770641 & 0.01278820055772 & 0.00000000000000 &
0.00000000000000\\\hline
\textbf{50} & 0.38233500771330 & 0.01278820055772 & 0.00000000000000 &
0.00000000000000\\\hline
\end{tabular}
\\
\text{\textbf{Discrete polyharmonic cubature} for }f_{2}w^{\left(  1\right)
}=\left(  \cos\left(  10x+20y\right)  \right)  \left(  \frac{1}{r}+\cos
\varphi\right)  .
\end{gather*}

Note that the formula is sensitive with respect to the values of $M:$ if $M$
is $9$ then large deviations occur (even if $N$ is large) , for $M=25$ the
approximation error is $0.01$ and the number $N$ of circles does not influence
much the results. For $M=63$ and $N=10$ the approximation error is very
small:\
\[
0.301\,310\,995\,\allowbreak
335\,215-0.30131093100867=0.000\,000\,064326\,545\,
\]

Next we consider the test function $f_{3}\left(  x,y\right)  =r^{5/2},$ for
which the integrand $f_{3}w^{\left(  1\right)  }=r^{3/2}\left(  1+r\cos
\varphi\right)  $ is smooth. Then the explicit computation gives
\[
\int_{0}^{2\pi}\int_{0}^{1}r^{3/2}\left(  1+r\cos\varphi\right)
rdrd\varphi=1.795\,195\,802\,\allowbreak051\,31
\]
with the following table:%
\begin{gather*}
\text{\textsc{Table} 3}\\%
\begin{tabular}
[c]{|l|l|l|l|l|}\hline
N/M & \textbf{9} & \textbf{25} & \textbf{63} & \textbf{83}\\\hline
\textbf{10} & 1.79513323182095 & 1.79513323182095 & 1.79513323182095 &
1.79513323182095\\\hline
\textbf{15} & 1.79518029482336 & 1.79518029482336 & 1.79518029482336 &
1.79518029482336\\\hline
\textbf{25} & 1.79519315318245 & 1.79519315318245 & 1.79519315318245 &
1.79519315318245\\\hline
\textbf{35} & 1.79519497859942 & 1.79519497859942 & 1.79519497859942 &
1.79519497859942\\\hline
\textbf{50} & 1.79519556405565 & 1.79519556405565 & 1.79519556405565 &
1.79519556405565\\\hline
Error &  &  &  & \\\hline
N/M & \textbf{9} & \textbf{25} & \textbf{63} & \textbf{83}\\\hline
\textbf{10} & 0.000062570230356 & 0.000062570230356 & 0.000062570230356 &
0.000062570230356\\\hline
\textbf{15} & 0.000015507227945 & 0.000015507227945 & 0.000015507227945 &
0.000015507227945\\\hline
\textbf{25} & 0.000002648868861 & 0.000002648868861 & 0.000002648868861 &
0.000002648868861\\\hline
\textbf{35} & 0.000000823451885 & 0.000000823451885 & 0.000000823451885 &
0.000000823451885\\\hline
\textbf{50} & 0.000000237995663 & 0.000000237995663 & 0.000000237995663 &
0.000000237995663\\\hline
\end{tabular}
\\
\text{\textbf{Discrete polyharmonic cubature} for }f_{3}w^{\left(  1\right)
}=r^{5/2}\left(  \frac{1}{r}+\cos\varphi\right)
\end{gather*}

\subsection{Comparison with the piece-wise midpoint rule}

The \emph{piecewise midpoint quadrature rule }(see e..g. \cite{DeBa01}) is
rather geometric: subdivide the disk of radius $R$ by concentric circles of
radius
\[
r_{j}=\frac{j^{2}-j+1/3}{j-\frac{1}{2}}\frac{R}{N}\text{ }\approx\frac{j}%
{N}R\text{ for }j=1,...,N
\]
and radial half-lines with angle $2\pi i/M$ for $i=1,...,M.$ Then the integral
over each subdomain is approximated by the evaluation of the integrand at the
centroid of the sector multiplied by the area of the sector, given by $\left(
j-\frac{1}{2}\right)  \left(  \frac{R}{N}\right)  ^{2}\frac{2\pi}{M}.$
Formally, we define:

\begin{definition}
The \emph{piecewise midpoint quadrature rule, }is given by:
\begin{equation}
I_{N,M}^{\text{mid }}\left(  f\right)  :=\frac{2\pi R^{2}}{M\cdot N^{2}}%
{\displaystyle\sum_{j=1}^{N}}
{\displaystyle\sum_{s=1}^{M}}
\left(  j-\frac{1}{2}\right)  f\left(  r_{j}\cos\frac{2\pi\left(  s-\frac
{1}{2}\right)  }{M},r_{j}\sin\frac{2\pi\left(  s-\frac{1}{2}\right)  }%
{M}\right)  . \label{Rule1}%
\end{equation}

\end{definition}

The results for the first test function $f_{0}$ are contained in the following
table.
\begin{gather*}
\text{\textsc{Table} 4}\\%
\begin{tabular}
[c]{|l|l|l|}\hline
N=M & $I_{N,N}^{\text{mid }}\left(  f_{0}w^{\left(  1\right)  }\right)  $ &
Error\\\hline
\textbf{5} & 6.\thinspace\allowbreak293\thinspace948\thinspace149\thinspace
525\thinspace97 & 0.460\thinspace476\thinspace055\thinspace\allowbreak
692\thinspace09\\\hline
\textbf{10} & 6.\thinspace\allowbreak552\thinspace664\thinspace285\thinspace
742\thinspace99 & 0.201\thinspace759\thinspace919\thinspace\allowbreak
475\thinspace07\\\hline
\textbf{20} & 6.\thinspace\allowbreak652\thinspace725\thinspace619\thinspace
004\thinspace72 & 0.101\thinspace698\thinspace586\thinspace\allowbreak
213\thinspace34\\\hline
\textbf{100} & 6.\thinspace\allowbreak733\thinspace953\thinspace
574\thinspace717\thinspace90 & 0.020\thinspace470\thinspace630\thinspace
\allowbreak500\thinspace16\\\hline
\textbf{200} & 6.\thinspace\allowbreak744\thinspace180\thinspace
708\thinspace690\thinspace65 & 0.010\thinspace243\thinspace496\thinspace
\allowbreak527\thinspace41\\\hline
\end{tabular}
\\
\text{\textbf{Midpoint cubature for} }f_{0}w^{\left(  1\right)  }=\left(
1+x^{4}+y^{3}\right)  \left(  \frac{1}{r}+\cos\varphi\right) \\
\text{\textbf{True Value }is:\ }\frac{43}{20}\pi\approx
6.754\,424\,205\,218\,060
\end{gather*}
Note that the case $N=200,$ i.e. $40\,000$ evaluation points, still leads to
an error of $0.01.$ The discrete polyharmonic cubature was exact for this case.

For the second test function $f_{1}$ we have a similar pattern:%

\begin{gather*}
\text{\textsc{Table} 5}\\%
\begin{tabular}
[c]{|l|l|l|}\hline
N=M & $I_{N,N}^{\text{mid }}\left(  f_{1}w^{\left(  1\right)  }\right)  $ &
Error\\\hline
\textbf{5} & 6.479\thinspace185\thinspace720\thinspace\allowbreak
369\thinspace13 & 0.393048209358541\\\hline
\textbf{10} & 6.671\thinspace455\thinspace800\thinspace\allowbreak
859\thinspace80 & 0.200778128867871\\\hline
\textbf{20} & 6.770\thinspace780\thinspace856\thinspace\allowbreak
013\thinspace81 & 0.101453073713858\\\hline
\textbf{100} & 6.851\thinspace773\thinspace117\thinspace\allowbreak
091\thinspace46 & 0.020460812636194\\\hline
\textbf{200} & 6.861\thinspace992\thinspace887\thinspace\allowbreak
600\thinspace82 & 0.010241042126847\\\hline
\end{tabular}
\\
\text{\textbf{Midpoint cubature for} }f_{1}w^{\left(  1\right)  }=\left(
1+r^{2}\cos^{3}\varphi+r^{5}\sin^{7}\varphi\right)  \left(  \frac{1}{r}%
+\cos\varphi\right)  \text{ }\\
\text{\textbf{True value is} }\approx6.872\,233\,929\,727\,67
\end{gather*}
and we see that the error is quite big; for $200\times200$ evaluation points
it is bigger than $0.01.$

For the third test function $f_{2}\left(  x,y\right)  =\cos\left(
10x+20y\right)  $ we obtain the following table:\ %

\begin{gather*}
\text{\textsc{Table} 6}\\%
\begin{tabular}
[c]{|l|l|l|l|l|}\hline
N%
$\backslash$%
M & \textbf{9} & \textbf{25} & \textbf{63} & \textbf{83}\\\hline
\textbf{10} & -0.190440454101284 & 0.0936727156130806 & 0.105393884431863 &
0.105393884431863\\\hline
\textbf{15} & -0.120671303989885 & 0.154856209500921 & 0.167165382069484 &
0.167165382069483\\\hline
\textbf{25} & -0.0641280979279670 & 0.207320729809817 & 0.219935669864855 &
0.219935669864855\\\hline
\textbf{35} & -0.0400423613447862 & 0.230317338211144 & 0.243017030992702 &
0.243017030992701\\\hline
\textbf{50} & -0.0220967384451548 & 0.247690217929060 & 0.260435021528943 &
0.260435021528943\\\hline
Error &  &  &  & \\\hline
N%
$\backslash$%
M & \textbf{9} & \textbf{25} & \textbf{63} & \textbf{83}\\\hline
\textbf{10} & 0.49175144943649 & 0.207638279722134 & 0.195917110903352 &
0.195917110903352\\\hline
\textbf{15} & 0.421982299325100 & 0.146454785834294 & 0.134145613265731 &
0.134145613265732\\\hline
\textbf{25} & 0.365439093263182 & 0.0939902655253980 & 0.0813753254703597 &
0.0813753254703599\\\hline
\textbf{35} & 0.341353356680001 & 0.0709936571240705 & 0.0582939643425133 &
0.0582939643425137\\\hline
\textbf{50} & 0.323407733780370 & 0.0536207774061552 & 0.0408759738062723 &
0.0408759738062725\\\hline
\end{tabular}
\\
\text{\textbf{Midpoint cubature} for }f_{2}w^{\left(  1\right)  }=\cos\left(
10x+20y\right)  \left(  \frac{1}{r}+\cos\varphi\right)  .
\end{gather*}

Finally we consider the differentiable test function $f_{3}\left(  x,y\right)
=r^{5/2}.$ We obtain the following results%

\begin{gather*}
\text{\textsc{Table} 7}\\%
\begin{tabular}
[c]{|l|l|l|}\hline
N=M & $I_{N,N}^{\text{mid }}\left(  f_{3}w^{\left(  1\right)  }\right)  $ &
Error\\\hline
\textbf{5} & $1.790188901210552$ & $0.005\,006\,901$\\\hline
\textbf{10} & $1.793908488488327$ & $0.001\,287\,313$\\\hline
\textbf{20} & $1.794870559302513$ & $0.000\,325\,242$\\\hline
\textbf{100} & $1.795182719690259$ & $0.000\,013\,082$\\\hline
\textbf{200} & $1.795192530240442$ & $0.000\,003\,271$\\\hline
\end{tabular}
\\
\text{\textbf{Midpoint cubature for} }f_{3}w^{\left(  1\right)  }%
=r^{5/2}\left(  \frac{1}{r}+\cos\varphi\right)  \text{ }\\
\text{\textbf{True value is} }\approx1.\,795\,195\,802\,051\,31
\end{gather*}
If we compare the $2\times9\times35=630$ valuations with precision $10^{-6}$
in Table 3 with the present $10000$ evaluations with precision $10^{-4},$ we
see how much better is our cubature formula, even if the integrand is a
$C^{1}$ function.

\subsection{Comparison with the generalized Peirce Rule}

S. De and K.J. Bathe discussed in \cite{DeBa00} (see also see \cite[p.
65]{MFPG07}) the following rule: For given $N,$ let $\rho_{j}$, $j=1,...,N$ be
the nodes of the Gauss quadrature $G_{N}$ on $\left[  0,R^{2}\right]  $ with
corresponding weights $w_{j},$further $\alpha$ be a real parameter and $M$ a
natural number: then
\begin{equation}
I_{N,M}^{\text{Peirce,}\alpha}\left(  f\right)  :=\frac{\pi}{M}%
{\displaystyle\sum_{j=1}^{N}}
w_{j}%
{\displaystyle\sum_{s=1}^{M}}
f\left(  \sqrt{\rho_{j}}\cos\frac{2\pi\left(  s+\alpha\right)  }{M},\sqrt
{\rho_{j}}\sin\frac{2\pi\left(  s+\alpha\right)  }{M}\right)
\label{modPeirce}%
\end{equation}
is called the generalized Peirce rule. The rule of Peirce \cite{Peir57} is
obtained by setting $N=m+1,$ $M=4m+4,$ and $\alpha=0.$

\begin{remark}
Let us note that the Discrete Polyharmonic Cubature for the constant weight
function $w\equiv1$ (hence $K=0$ in (\ref{DiscrPolyhCubature})) is identical
with formula (\ref{modPeirce}) with $\alpha=0.$ Thus the numerical evaluation
of this formula can be carried out with our programme.
\end{remark}

The next table gives the results for the rule of Peirce $I_{N,M}%
^{\text{Peirce,0}}\left(  f_{0}w^{\left(  1\right)  }\right)  $ for the first
test function:%

\begin{gather*}
\text{\textsc{Table} 8}\\%
\begin{tabular}
[c]{|l|l|l|l|l|}\hline
N/M & \textbf{9} & \textbf{25} & \textbf{63} & \textbf{83}\\\hline
\textbf{10} & 6.49387212 & 6.49387212 & 6.49387212 & 6.49387212\\\hline
\textbf{15} & 6.577936813 & 6.577936813 & 6.577936813 & 6.577936813\\\hline
\textbf{25} & 6.647152541 & 6.647152541 & 6.647152541 & 6.647152541\\\hline
\textbf{35} & 6.677370918 & 6.677370918 & 6.677370918 & 6.677370918\\\hline
\textbf{50} & 6.700258414 & 6.700258414 & 6.700258414 & 6.700258414\\\hline
\end{tabular}
\\
\text{\textbf{Peirce Cubature rule for} }f_{0}w^{\left(  1\right)  }=\left(
1+x^{4}+y^{3}\right)  \left(  \frac{1}{r}+\cos\varphi\right) \\
\text{\textbf{True value is }}\frac{43}{20}\pi\approx6.754\,\allowbreak
424\,\allowbreak205\,\allowbreak218\,\allowbreak060
\end{gather*}
Even in the case $N=25,$ $M=25,$ with the number of evaluation points equal to
$NM=25\cdot25=625,$ we have an error of
\[
6.754424205218060-6.647152541=0.107\,271\,664\,218\,06
\]
In case of $NM=50\cdot83=4150$ evaluation points (which is about $6$ times
bigger than $625$ ) the error is only about twice smaller:
\[
6.754424205218060-6.700258414=0.054\,165\,791\,\allowbreak218\,06
\]

The experiments with the test functions $f_{1}$, $f_{2}$ and $f_{3}$ have
shown a similar behaviour.

\section{Experimental results for the weight function $w^{\left(  2\right)
}\left(  x,y\right)  $\label{Sexperiments2}}

In this section we discuss a weight function which is of quite different
nature compared to the first weight function. The function
\begin{equation}
w^{\left(  2\right)  }\left(  re^{i\varphi}\right)  :=\left\vert y\right\vert
=\left\vert r\sin\varphi\right\vert \label{w2}%
\end{equation}
is simple in the sense that it is homogeneous in the variable $r.$ However it
has an infinite Fourier series since
\[
\left\vert \sin\varphi\right\vert =\frac{2}{\pi}-\frac{4}{\pi}\sum
_{k=1}^{\infty}\frac{\cos\left(  2k\varphi\right)  }{4k^{2}-1}.
\]
The weight function $w^{\left(  2\right)  }$ is \emph{pseudo-definite} since
its orthonormalized Fourier coefficients have a definite sign:
\[
w_{\left(  0,1\right)  }\left(  r\right)  =\frac{2\sqrt{2}}{\sqrt{\pi}}r\text{
and }w_{\left(  2k,1\right)  }\left(  r\right)  =-\frac{4}{\sqrt{\pi}}\frac
{1}{4k^{2}-1}r\text{ for }k\geq1.
\]
We recall our main integration formula (\ref{eqint})
\begin{equation}
I_{w}\left(  f\right)  =\int_{0}^{2\pi}\int_{0}^{R}f\left(  re^{i\varphi
}\right)  w\left(  re^{i\varphi}\right)  rdrd\varphi=%
{\displaystyle\sum_{k=0}^{\infty}}
{\displaystyle\sum_{\ell=1}^{a_{k}}}
\int_{0}^{R}f_{\left(  k,\ell\right)  }\left(  r\right)  w_{\left(
k,\ell\right)  }\left(  r\right)  rdr. \label{eqlast1}%
\end{equation}

\subsection{Results for the discrete polyharmonic cubature}

Further we consider the test function
\begin{equation}
f_{4}\left(  x,y\right)  =30x^{12}. \label{fun4w}%
\end{equation}
Note that
\[
I_{w^{\left(  2\right)  }}\left(  f_{4}\right)  =I_{1}\left(  30x^{12}%
\left\vert y\right\vert \right)  =\int_{0}^{1}\int_{0}^{2\pi}30r^{12}\cos
^{12}\left(  \varphi\right)  \left\vert \sin\varphi\right\vert r^{2}%
drd\varphi=\frac{8}{13}\approx0.615\,384\,615\,384\,610.
\]
Since $f_{4}\left(  x,y\right)  $ is a polynomial of degree $12$ the Fourier
coefficients satisfy $f_{4,\left(  k,\ell\right)  }\left(  r\right)  =0$ for
$k>13,$ hence, we may take $K=12.$ For $M>24$ and $N>12$ the polynomial
$f\left(  x,y\right)  $ will be reproduced, i.e. holds
\[
I_{\left(  N,M,12\right)  }^{\text{poly }}\left(  f\right)  =I_{w^{\left(
2\right)  }}\left(  f\right)  =I_{1}\left(  fw^{\left(  2\right)  }\right)  .
\]
This can be seen in the following table:%
\begin{gather*}
\text{\textsc{Table} 9}\\%
\begin{tabular}
[c]{|r|rrrr|}\hline
N%
$\backslash$%
M & \multicolumn{1}{|r|}{\textbf{9}} & \multicolumn{1}{r|}{\textbf{25}} &
\multicolumn{1}{r|}{\textbf{63}} & \textbf{83}\\\hline
\textbf{10} & \multicolumn{1}{|r|}{0.5609353695139790} &
\multicolumn{1}{r|}{0.6153846153846160} &
\multicolumn{1}{r|}{0.6153846153846160} & 0.6153846153846160\\\hline
\textbf{15} & \multicolumn{1}{|r|}{0.5609353656165750} &
\multicolumn{1}{r|}{0.6153846153846150} &
\multicolumn{1}{r|}{0.6153846153846150} & 0.6153846153846150\\\hline
\textbf{25} & \multicolumn{1}{|r|}{0.5609353655541600} &
\multicolumn{1}{r|}{0.6153846153846170} &
\multicolumn{1}{r|}{0.6153846153846170} & 0.6153846153846170\\\hline
\textbf{35} & \multicolumn{1}{|r|}{0.5609353655539850} &
\multicolumn{1}{r|}{0.6153846153846130} &
\multicolumn{1}{r|}{0.6153846153846140} & 0.6153846153846140\\\hline
\textbf{50} & \multicolumn{1}{|r|}{0.5609353655539860} &
\multicolumn{1}{r|}{0.6153846153846170} &
\multicolumn{1}{r|}{0.6153846153846170} & 0.6153846153846170\\\hline
Error & \multicolumn{1}{|r|}{} & \multicolumn{1}{r|}{} & \multicolumn{1}{r|}{}
& \\\hline
N%
$\backslash$%
M & \multicolumn{1}{|r|}{\textbf{9}} & \multicolumn{1}{r|}{\textbf{25}} &
\multicolumn{1}{r|}{\textbf{63}} & \textbf{83}\\\hline
\textbf{10} & \multicolumn{1}{|r|}{0.0544492458706369} &
\multicolumn{1}{r|}{0.0000000000000000} &
\multicolumn{1}{r|}{0.0000000000000000} & 0.0000000000000000\\\hline
\textbf{15} & \multicolumn{1}{|r|}{0.0544492497680410} &
\multicolumn{1}{r|}{0.0000000000000010} &
\multicolumn{1}{r|}{0.0000000000000010} & 0.0000000000000010\\\hline
\textbf{25} & \multicolumn{1}{|r|}{0.0544492498304560} &
\multicolumn{1}{r|}{0.0000000000000010} &
\multicolumn{1}{r|}{0.0000000000000010} & 0.0000000000000010\\\hline
\textbf{35} & \multicolumn{1}{|r|}{0.0544492498306309} &
\multicolumn{1}{r|}{0.0000000000000030} &
\multicolumn{1}{r|}{0.0000000000000020} & 0.0000000000000020\\\hline
\textbf{50} & \multicolumn{1}{|r|}{0.0544492498306299} &
\multicolumn{1}{r|}{0.0000000000000010} &
\multicolumn{1}{r|}{0.0000000000000010} & 0.0000000000000010\\\hline
\end{tabular}
\\
\text{\textbf{Discrete Polyharmonic cubature} for }K=12\text{ and }%
f_{4}w^{\left(  2\right)  }=30x^{12}\left\vert y\right\vert \\
\text{\textbf{True value} is }\approx0.615\,384\,615\,\allowbreak384\,616
\end{gather*}

Next we consider the test function
\begin{equation}
f_{5}\left(  x,y\right)  :=\left\vert y\right\vert . \label{fun5w}%
\end{equation}
Since it is not smooth we should not expect to obtain too good results for the
discrete polyharmonic cubature. The exact value of the integral can be
computed:
\begin{align*}
I_{1}\left(  \left\vert y\right\vert \left\vert y\right\vert \right)   &
=\int_{0}^{1}\int_{0}^{2\pi}\left\vert r\sin\varphi\right\vert ^{2}d\varphi
rdr=\int_{0}^{1}r^{3}dr\cdot\int_{0}^{2\pi}\left\vert \sin\varphi\right\vert
^{2}d\varphi\\
&  =\frac{1}{4}\pi\approx0.785\,398\,163\,397\,448\,
\end{align*}
We took $K=12,$ as in the last experiment, and we obtained a table of results
where the error is almost the same for all $N$ and $M$, and is around
$10^{-3}.$ If we take $K=22$ the results are much better:
\begin{gather*}
\text{\textsc{Table} 10}\\%
\begin{tabular}
[c]{|r|rrrr|}\hline
N%
$\backslash$%
M & \multicolumn{1}{|r|}{\textbf{9}} & \multicolumn{1}{r|}{\textbf{25}} &
\multicolumn{1}{r|}{\textbf{63}} & \textbf{83}\\\hline
\textbf{10} & \multicolumn{1}{|r|}{0.785206660} &
\multicolumn{1}{r|}{0.785352337} & \multicolumn{1}{r|}{0.785367124} &
0.785369362\\\hline
\textbf{15} & \multicolumn{1}{|r|}{0.785208297} &
\multicolumn{1}{r|}{0.785358970} & \multicolumn{1}{r|}{0.785373081} &
0.785375274\\\hline
\textbf{25} & \multicolumn{1}{|r|}{0.785208235} &
\multicolumn{1}{r|}{0.785361119} & \multicolumn{1}{r|}{0.785374994} &
0.785377171\\\hline
\textbf{35} & \multicolumn{1}{|r|}{0.785208149} &
\multicolumn{1}{r|}{0.785361440} & \multicolumn{1}{r|}{0.785375276} &
0.785377452\\\hline
\textbf{50} & \multicolumn{1}{|r|}{0.785208109} &
\multicolumn{1}{r|}{0.785361541} & \multicolumn{1}{r|}{0.785375364} &
0.785377539\\\hline
Error & \multicolumn{1}{|r|}{} & \multicolumn{1}{r|}{} & \multicolumn{1}{r|}{}
& \\\hline
N%
$\backslash$%
M & \multicolumn{1}{|r|}{\textbf{9}} & \multicolumn{1}{r|}{\textbf{25}} &
\multicolumn{1}{r|}{\textbf{63}} & \textbf{83}\\\hline
\textbf{10} & \multicolumn{1}{|r|}{0.0001915037} &
\multicolumn{1}{r|}{0.0000458267} & \multicolumn{1}{r|}{0.0000310393} &
0.0000288009\\\hline
\textbf{15} & \multicolumn{1}{|r|}{0.0001898666} &
\multicolumn{1}{r|}{0.0000391939} & \multicolumn{1}{r|}{0.0000250824} &
0.0000228890\\\hline
\textbf{25} & \multicolumn{1}{|r|}{0.0001899281} &
\multicolumn{1}{r|}{0.0000370443} & \multicolumn{1}{r|}{0.0000231697} &
0.0000209919\\\hline
\textbf{35} & \multicolumn{1}{|r|}{0.0001900142} &
\multicolumn{1}{r|}{0.0000367231} & \multicolumn{1}{r|}{0.0000228870} &
0.0000207116\\\hline
\textbf{50} & \multicolumn{1}{|r|}{0.0001900544} &
\multicolumn{1}{r|}{0.0000366227} & \multicolumn{1}{r|}{0.0000227993} &
0.0000206248\\\hline
\end{tabular}
\\
\text{\textbf{Discrete Polyharmonic cubature} for }K=22\text{ and }%
f_{5}w^{\left(  2\right)  }=\left\vert y\right\vert ^{2}\text{ }\\
\text{\textbf{True value} is }\approx0.785\,398\,163\,397\,448
\end{gather*}

Finally, we look again at the oscillating test function (\ref{fun3}), namely,
$f_{2}\left(  x,y\right)  =\cos\left(  10x+20y\right)  .$Using the expansion%
\[
C^{a,b}\left(  x,y\right)  :=\cos\left(  ax+by\right)  =%
{\displaystyle\sum_{n=0}^{\infty}}
\frac{\left(  -1\right)  ^{n}}{\left(  2n\right)  !}\left(  ax+by\right)
^{2n}%
\]
one can prove that the Fourier coefficients $C_{\left(  2k+1,\ell\right)
}^{a,b}\left(  r\right)  $ are zero and
\[
C_{\left(  2k,\ell\right)  }^{a,b}\left(  r\right)  =2\pi Y_{2k,\ell}\left(
\varphi_{a,b}\right)  \left(  -1\right)  ^{k}J_{2k}\left(  r\sqrt{\left(
a^{2}+b^{2}\right)  }\right)
\]
where $J_{2k}$ are the Bessel functions, see formula (\ref{bessel}), and
$\varphi_{a,b}$ is the angle given by $\tan\varphi_{a,b}=\frac{b}{a}.$ It
follows that
\begin{equation}
I_{w^{\left(  2\right)  }}\left(  f\right)  =4\int_{0}^{1}J_{0}\left(
r\sqrt{a^{2}+b^{2}}\right)  r^{2}dr+%
{\displaystyle\sum_{k=1}^{\infty}}
8\frac{\left(  -1\right)  ^{k+1}\cos\left(  2k\varphi_{a,b}\right)  }%
{4k^{2}-1}\int_{0}^{1}J_{2k}\left(  r\sqrt{a^{2}+b^{2}}\right)  r^{2}dr.
\label{Iwf}%
\end{equation}
We may estimate the error using this expression.

In formula (\ref{DiscrPolyhCubature+}), for the \textbf{Discrete Polyharmonic
Cubature, }we have taken $K=22.$ We obtain the following results:
\begin{gather*}
\text{\textsc{Table }11}\\%
\begin{tabular}
[c]{|r|rrrr|}\hline
N%
$\backslash$%
M & \multicolumn{1}{|r|}{\textbf{9}} & \multicolumn{1}{r|}{\textbf{25}} &
\multicolumn{1}{r|}{\textbf{63}} & \textbf{83}\\\hline
\textbf{10} & \multicolumn{1}{|r|}{0.096718846427391} &
\multicolumn{1}{r|}{0.014472433304185} &
\multicolumn{1}{r|}{0.014477271351135} & 0.014477271351135\\\hline
\textbf{15} & \multicolumn{1}{|r|}{0.096642162991824} &
\multicolumn{1}{r|}{0.014472441635349} &
\multicolumn{1}{r|}{0.014477279682299} & 0.014477279682299\\\hline
\textbf{25} & \multicolumn{1}{|r|}{0.096593708566055} &
\multicolumn{1}{r|}{0.014472441635349} &
\multicolumn{1}{r|}{0.014477279682299} & 0.014477279682299\\\hline
\textbf{35} & \multicolumn{1}{|r|}{0.096592741482745} &
\multicolumn{1}{r|}{0.014472441635350} &
\multicolumn{1}{r|}{0.014477279682299} & 0.014477279682299\\\hline
\textbf{50} & \multicolumn{1}{|r|}{0.096597855764522} &
\multicolumn{1}{r|}{0.014472441635349} &
\multicolumn{1}{r|}{0.014477279682299} & 0.014477279682299\\\hline
Error & \multicolumn{1}{|r|}{} & \multicolumn{1}{r|}{} & \multicolumn{1}{r|}{}
& \\\hline
N%
$\backslash$%
M & \multicolumn{1}{|r|}{\textbf{9}} & \multicolumn{1}{r|}{\textbf{25}} &
\multicolumn{1}{r|}{\textbf{63}} & \textbf{83}\\\hline
\textbf{10} & \multicolumn{1}{|r|}{0.082241566745092} &
\multicolumn{1}{r|}{0.000004846378115} &
\multicolumn{1}{r|}{0.000000008331165} & 0.000000008331165\\\hline
\textbf{15} & \multicolumn{1}{|r|}{0.082164883309524} &
\multicolumn{1}{r|}{0.000004838046951} &
\multicolumn{1}{r|}{0.000000000000001} & 0.000000000000001\\\hline
\textbf{25} & \multicolumn{1}{|r|}{0.082116428883756} &
\multicolumn{1}{r|}{0.000004838046950} &
\multicolumn{1}{r|}{0.000000000000000} & 0.000000000000000\\\hline
\textbf{35} & \multicolumn{1}{|r|}{0.082115461800446} &
\multicolumn{1}{r|}{0.000004838046950} &
\multicolumn{1}{r|}{0.000000000000000} & 0.000000000000000\\\hline
\textbf{50} & \multicolumn{1}{|r|}{0.082120576082222} &
\multicolumn{1}{r|}{0.000004838046950} &
\multicolumn{1}{r|}{0.000000000000000} & 0.000000000000000\\\hline
\end{tabular}
\\
\text{\textbf{Discrete Polyharmonic cubature} for }K=22\text{ and }%
f_{2}w^{\left(  2\right)  }=\cos\left(  10r\cos\varphi+20r\sin\varphi\right)
\left\vert r\sin\varphi\right\vert \\
\text{\textbf{True value} is }\approx0.0144772796822995
\end{gather*}

\subsection{Comparison with piece-wise midpoint rule}

We want to give briefly the results for the piece-wise midpoint rule for the
test function $f_{4}\left(  x,y\right)  =30x^{12}$ with the weight $w^{\left(
2\right)  }\left(  x,y\right)  =\left\vert y\right\vert .$
\begin{align*}
I_{5,5}^{\text{mid}}\left(  30x^{12}\left\vert y\right\vert \right)   &
=0.173\,359\,053\,\allowbreak300\,102\,\\
I_{10,10}^{\text{mid}}\left(  30x^{12}\left\vert y\right\vert \right)   &
=0.795\,909\,972\,\allowbreak453\,979\\
I_{20,20}^{\text{mid}}\left(  30x^{12}\left\vert y\right\vert \right)   &
=0.640\,355\,591\,\allowbreak783\,074\\
I_{40,40}^{\text{mid}}\left(  30x^{12}\left\vert y\right\vert \right)   &
=0.620\,920\,690\,\allowbreak132\,442\\
I_{100,100}^{\text{mid}}\left(  30x^{12}\left\vert y\right\vert \right)   &
=0.616\,243\,873\,\allowbreak682\,115\\
I_{200,200}^{\text{mid}}\left(  30x^{12}\left\vert y\right\vert \right)   &
=0.615\,598\,519\,\allowbreak244\,782\\
I_{500,500}^{\text{mid}}\left(  30x^{12}\left\vert y\right\vert \right)   &
=0.615\,418\,799\,\allowbreak448\,66\\
\text{\textbf{True value} is}\text{ }  &  \approx0.615\,384\,615\,\allowbreak
384\,616
\end{align*}
We see that even with $500\times500$ evaluation points the error is around
$0.0001.$ On the other hand, for the test function $f_{5}\left(  x,y\right)
=\left\vert y\right\vert $ the piecewise midpoint rule is very good, since the
integrand is just $\left\vert y\right\vert ^{2}$ but we do not have exactness.

We omit a discussion of the Peirce rules since the results are very similar to
the case of the midpoint cubature rule.

\section{Aspects of the Numerical Implementation \label{Spractical}}

The discrete polyharmonic cubature (\ref{DiscrPolyhCubature}) is defined for
pseudo-definite weight functions $w\left(  re^{i\varphi}\right)  $ on the
disk. It is important to determine numerically the nodes and weights for the
quadrature of degree $N$ for the univariate integrals
\[
\frac{1}{2}\int_{0}^{R^{2}}P\left(  \rho\right)  \rho^{k/2}w_{\left(
k,\ell\right)  }\left(  \sqrt{\rho}\right)  d\rho
\]
with high accuracy. The nodes are the zeros of the orthogonal polynomial
$P_{n}\left(  \rho\right)  $ of degree $N$ with respect to the measure
$\rho^{k/2}w_{\left(  k,\ell\right)  }\left(  \sqrt{\rho}\right)  .$ Thus any
implementation of our algorithm depends on reliable and stable software for
finding the nodes and weights of the corresponding quadrature. We have
judiciously chosen the weights $w^{\left(  1\right)  }$ and $w^{\left(
2\right)  }$ such that
\[
w_{\left(  k,\ell\right)  }\left(  \sqrt{\rho}\right)  =C_{\left(  k,l\right)
}r^{\alpha\left(  k,\ell\right)  }\left(  1-r\right)  ^{\beta\left(
k,\ell\right)  }%
\]
are weight functions of Jacobi type, i.e. of the form $x^{\alpha}\left(
1-x\right)  ^{\beta}$ with $\alpha,\beta\geq-1$ in the interval $\left[
0,1\right]  $. For these weight functions fast and highly accurate programmes
for finding the nodes and weights are available (see e.g.
\cite{GautschiBOOKorthogonal} or the website:
https://www.cs.purdue.edu/archives/2002/wxg/codes) which we have used in our
Matlab programs. A more general situation, namely when the Fourier
coefficients $w_{\left(  k,\ell\right)  }\left(  \sqrt{\rho}\right)  $ are
linear combinations of Jacobi type weight functions, is easy to handle. In the
case that $w_{\left(  k,\ell\right)  }\left(  \sqrt{\rho}\right)  $ is a
general pseudo-definite function the user has to search or develop numerical
procedures for finding the nodes and coefficients with high accuracy. Another
idea, which we did not pursue in detail, is to approximate $w_{\left(
k,\ell\right)  }\left(  \sqrt{\rho}\right)  $ by means of a Bernstein
polynomial $\sum_{j=0}^{d}\gamma_{j}\left(  1-\rho\right)  ^{j}\rho^{d-j}$ of
some degree $d,$ which is then a linear combination of Jacobi weights.
However, a problem might be the approximation rate for $d\longrightarrow
\infty$, which is notoriously not very good.

In our Matlab program we used the readily implemented Matlab function for the
Fast Fourier Transform: then the Fourier series is in the form
\[
w\left(  re^{i\varphi}\right)  =%
{\displaystyle\sum_{k=-\infty}^{\infty}}
w_{k}\left(  r\right)  e^{ik\varphi}%
\]
with the usual relations $w_{0}\left(  r\right)  =\frac{1}{2}w_{\left(
0,1\right)  }\left(  r\right)  $ and
\[
w_{\left(  k,1\right)  }\left(  r\right)  =\frac{1}{2}\left(  w_{k}\left(
r\right)  +\overline{w_{k}\left(  r\right)  }\right)  \text{ and }w_{\left(
k,2\right)  }\left(  r\right)  =\frac{1}{2i}\left(  w_{k}\left(  r\right)
-\overline{w_{k}\left(  r\right)  }\right)  .
\]
Here $w_{k}\left(  r\right)  :=\frac{1}{2\pi}\int_{0}^{2\pi}w\left(
re^{i\varphi}\right)  e^{-ik\varphi}d\varphi$. For the computation of the
approximation $f_{\left(  k,\ell\right)  }^{\left(  M\right)  }$ to the
Fourier coefficients $f_{\left(  k,\ell\right)  }\left(  r\right)  $ of the
integration function $f\left(  x,y\right)  $ we use the Fourier series
representation
\[
f\left(  re^{i\varphi}\right)  =\sum_{k=-\infty}^{\infty}f_{k}\left(
r\right)  e^{ik\varphi}\text{, where }f_{k}\left(  r\right)  :=\frac{1}{2\pi
}\int_{0}^{2\pi}f\left(  re^{i\varphi}\right)  e^{-ik\varphi}d\varphi.
\]
Hence, the integral (\ref{eqcentral2}) can be written in the form
\begin{equation}
I\left(  f\right)  =%
{\displaystyle\sum_{k=-\infty}^{\infty}}
\int_{0}^{R}2\pi f_{k}\left(  r\right)  w_{-k}\left(  r\right)  rdr=%
{\displaystyle\sum_{k=-\infty}^{\infty}}
\int_{0}^{R}\pi f_{k}\left(  \sqrt{\rho}\right)  w_{-k}\left(  \sqrt{\rho
}\right)  d\rho.\label{eqcentral}%
\end{equation}
In our program we choose always odd $M,$ and the discrete Fourier
approximation to $f_{k}\left(  r\right)  $ given by
\[
f_{k}^{\left(  M\right)  }\left(  r\right)  :=f_{\left(  k,1\right)
}^{\left(  M\right)  }\left(  r\right)  -if_{\left(  k,2\right)  }^{\left(
M\right)  }\left(  r\right)  =\frac{1}{M}%
{\displaystyle\sum_{s=1}^{M}}
f\left(  re^{2\pi i\frac{s}{M}}\right)  e^{-2\pi i\frac{ks}{M}}%
\]
which is the link to the Fast Fourier transform. By subdividing the interval
$\left[  0,2\pi\right]  $ into $M$ subintervals we obtain an approximation
\[
f_{k}^{M}\left(  r\right)  :=\frac{1}{M}%
{\displaystyle\sum_{s=1}^{M}}
f\left(  re^{2\pi is/M}\right)  e^{-2\pi iks/M}%
\]
of $f_{k}\left(  r\right)  $ which is just the Discrete Fourier transform
(DFT) for the data points $f\left(  re^{2\pi is/M}\right)  $ for $s=1,...,M.$

\section{Concluding Remarks}

The discrete polyharmonic cubature formula $I_{\left(  N,M,K\right)
}^{\text{poly}}\left(  f\right)  $ provides excellent numerical results for
integrating functions on the disk in the plane with respect to a weight
function $w.$ A possible drawback for applications might be the high number of
evaluations points needed in the formula given by $\left(  2K-1\right)  \cdot
N\cdot M$ evaluation points. Since the evaluation of a function value
$f\left(  x\right)  $ might be very costly, it is a natural question whether
one could modify the formula so that we would need less function evaluations.

In a forthcoming paper \cite{kounchevRenderHybrid} we introduce a new cubature
formula related to the above formula (\ref{DiscrPolyhCubature+}) with a high
degree of "approximative exactness", which uses a spline approximation in
direction $r$ of the function $f_{\left(  k,\ell\right)  }^{\left(  M\right)
}\left(  r\right)  $. The point evaluations for this formula are on a regular
grid in both directions $\varphi$ and $r$, and the number of knots is
$N_{1}\times M$, where $N_{1}$ is the number of spline knots in direction $r.$
We call this a hybrid cubature since we combine spline methods with a cubature
formula. In mathematical terms the hybrid formula is of the form:
\begin{equation}
I_{\left(  N,M,K\right)  }^{\text{poly}}\left(  f\right)  \approx\frac{1}%
{2}\sum_{k=0}^{K}\sum_{\ell=1}^{a_{k}}\sum_{j=1}^{N}SPL\left[  \left\{
f_{\left(  k,\ell\right)  }^{\left(  M\right)  }\left(  R_{m}\right)
\right\}  _{m=0}^{N_{1}}\right]  \left(  \sqrt{t_{j,\left(  k,\ell\right)  }%
}\right)  \times t_{j,\left(  k,\ell\right)  }^{-\frac{k}{2}}\lambda
_{j,\left(  k,\ell\right)  }, \label{r5}%
\end{equation}
where $SPL\left[  \left\{  f_{\left(  k,\ell\right)  }^{\left(  M\right)
}\left(  R_{m}\right)  \right\}  _{m=0}^{N_{1}}\right]  \left(  t\right)  $ is
a univariate spline interpolation function with nodes $\left\{  R_{m}\right\}
_{m=0}^{N_{1}}$ for the data $\left\{  f_{\left(  k,\ell\right)  }^{\left(
M\right)  }\left(  R_{m}\right)  \right\}  _{m=0}^{N_{1}}$.

\begin{acknowledgement}
Both authors acknowledge the partial support by the Bulgarian NSF Grant
I02/19, 2015. The first named author acknowledges partial support by the
Humboldt Foundation.
\end{acknowledgement}

Author 1 affiliations:\ Institute of Mathematics and Informatics, Bulgarian
Academy of Science, Acad. G. Bonchev st., bl. 8, 1113 Sofia, Bulgaria, and
IZKS-University of Bonn.

Author 2 affiliation: School of Mathematics and Statistics, University College
Dublin, Belfield 4, Dublin, Ireland.

\end{document}